\begin{document}

\allowdisplaybreaks


\theoremstyle{plain}
\newtheorem{theorem}{Theorem}
\newtheorem{conjecture}[theorem]{Conjecture}
\newtheorem{proposition}[theorem]{Proposition}
\newtheorem{lemma}[theorem]{Lemma}
\newtheorem{corollary}[theorem]{Corollary}
\newtheorem{conjecturedlemma}[theorem]{Conjectured Lemma}
\newtheorem{question}[theorem]{Question}

\theoremstyle{definition}
\newtheorem{definition}[theorem]{Definition}

\theoremstyle{remark}
\newtheorem{remark}[theorem]{Remark}
\newtheorem{example}[theorem]{Example}
\newtheorem*{claim}{Claim}
\newtheorem*{acknowledgement}{Acknowledgements}

\def\BigStrut{\vphantom{$(^{(^(}_{(}$}} 

\newenvironment{notation}[0]{%
  \begin{list}%
    {}%
    {\setlength{\itemindent}{0pt}
     \setlength{\labelwidth}{4\parindent}
     \setlength{\labelsep}{\parindent}
     \setlength{\leftmargin}{5\parindent}
     \setlength{\itemsep}{0pt}
     }%
   }%
  {\end{list}}

\newenvironment{parts}[0]{%
  \begin{list}{}%
    {\setlength{\itemindent}{0pt}
     \setlength{\labelwidth}{1.5\parindent}
     \setlength{\labelsep}{.5\parindent}
     \setlength{\leftmargin}{2\parindent}
     \setlength{\itemsep}{0pt}
     }%
   }%
  {\end{list}}
\newcommand{\Part}[1]{\item[\upshape#1]}

\newcount\stepcount
\def\Step{\advance\stepcount by1\par\noindent
     \textbf{Step \the\stepcount}:\enspace\ignorespaces}

\renewcommand{\a}{\alpha}
\renewcommand{\b}{\beta}
\newcommand{\g}{\gamma}
\renewcommand{\d}{\delta}
\newcommand{\e}{\epsilon}
\newcommand{\f}{\phi}
\renewcommand{\l}{\lambda}
\renewcommand{\k}{\kappa}
\newcommand{\lhat}{\hat\lambda}
\newcommand{\m}{\mu}
\renewcommand{\o}{\omega}
\renewcommand{\r}{\rho}
\newcommand{\rbar}{{\bar\rho}}
\newcommand{\s}{\sigma}
\newcommand{\sbar}{{\bar\sigma}}
\renewcommand{\t}{\tau}
\newcommand{\z}{\zeta}
\newcommand{\bfmu}{\boldsymbol\mu}
\newcommand{\bfsigma}{\boldsymbol\sigma}
\newcommand{\bfxi}{\boldsymbol\xi}
\newcommand{\bfeta}{\boldsymbol\eta}
\newcommand{\bfzeta}{\boldsymbol\zeta}

\newcommand{\D}{\Delta}
\newcommand{\F}{\Phi}
\newcommand{\G}{\Gamma}

\newcommand{\gF}{{\mathfrak{F}}}
\newcommand{\gp}{{\mathfrak{p}}}
\newcommand{\gm}{{\mathfrak{m}}}
\newcommand{\gn}{{\mathfrak{n}}}
\newcommand{\gP}{{\mathfrak{P}}}
\newcommand{\gW}{{\mathfrak{W}}}

\def\Acal{{\mathcal A}}
\def\Bcal{{\mathcal B}}
\def\Ccal{{\mathcal C}}
\def\Dcal{{\mathcal D}}
\def\Ecal{{\mathcal E}}
\def\Fcal{{\mathcal F}}
\def\Gcal{{\mathcal G}}
\def\Hcal{{\mathcal H}}
\def\Ical{{\mathcal I}}
\def\Jcal{{\mathcal J}}
\def\Kcal{{\mathcal K}}
\def\Lcal{{\mathcal L}}
\def\Mcal{{\mathcal M}}
\def\Ncal{{\mathcal N}}
\def\Ocal{{\mathcal O}}
\def\Pcal{{\mathcal P}}
\def\Qcal{{\mathcal Q}}
\def\Rcal{{\mathcal R}}
\def\Scal{{\mathcal S}}
\def\Tcal{{\mathcal T}}
\def\Ucal{{\mathcal U}}
\def\Vcal{{\mathcal V}}
\def\Wcal{{\mathcal W}}
\def\Xcal{{\mathcal X}}
\def\Ycal{{\mathcal Y}}
\def\Zcal{{\mathcal Z}}

\renewcommand{\AA}{\mathbb{A}}
\newcommand{\BB}{\mathbb{B}}
\newcommand{\CC}{\mathbb{C}}
\newcommand{\FF}{\mathbb{F}}
\newcommand{\GG}{\mathbb{G}}
\newcommand{\NN}{\mathbb{N}}
\newcommand{\PP}{\mathbb{P}}
\newcommand{\QQ}{\mathbb{Q}}
\newcommand{\RR}{\mathbb{R}}
\newcommand{\ZZ}{\mathbb{Z}}

\def \bfa{{\boldsymbol a}}
\def \bfb{{\boldsymbol b}}
\def \bfc{{\boldsymbol c}}
\def \bfd{{\boldsymbol d}}
\def \bfe{{\boldsymbol e}}
\def \bff{{\boldsymbol f}}
\def \bfF{{\boldsymbol F}}
\def \bfg{{\boldsymbol g}}
\def \bfj{{\boldsymbol j}}
\def \bfp{{\boldsymbol p}}
\def \bfr{{\boldsymbol r}}
\def \bfs{{\boldsymbol s}}
\def \bft{{\boldsymbol t}}
\def \bfu{{\boldsymbol u}}
\def \bfv{{\boldsymbol v}}
\def \bfw{{\boldsymbol w}}
\def \bfx{{\boldsymbol x}}
\def \bfX{{\boldsymbol X}}
\def \bfy{{\boldsymbol y}}
\def \bfz{{\boldsymbol z}}

\newcommand{\abs}[1]{\left\lvert#1\right\rvert}
\newcommand{\Aut}{\operatorname{Aut}}
\newcommand{\can}{{\textup{can}}}
\newcommand{\Crit}{\operatorname{\mathcal{C}}}
\newcommand{\Branch}{\operatorname{\mathcal{B}}}
\newcommand{\Disc}{\operatorname{Disc}}
\newcommand{\Div}{\operatorname{Div}}
\newcommand{\drpt}{(\!(t)\!)}  
\newcommand{\dspt}{[\![t]\!]}  
\newcommand{\ddd}{D}  
\newcommand{\End}{\operatorname{End}}
\newcommand{\Fbar}{{\bar{\FF}}}
\newcommand{\Fix}{\operatorname{Fix}}
\newcommand{\Frob}{\operatorname{Frob}}
\newcommand{\Gal}{\operatorname{Gal}}
\newcommand{\hhat}{{\hat h}}
\newcommand{\Id}{\operatorname{Id}}
\newcommand{\Image}{\operatorname{Image}}
\newcommand{\isom}{\cong}
\newcommand{\Kbar}{{\bar K}}
\newcommand{\Mat}{\operatorname{Mat}}
\newcommand{\MOD}[1]{~(\textup{mod}~#1)}
\newcommand{\Mor}{\operatorname{Mor}}
\newcommand{\Moduli}{{\mathcal{M}}}
\newcommand{\Mult}{\operatorname{Mult}}
\newcommand{\Norm}{\operatorname{N}}
\newcommand{\notdivide}{\nmid}
\newcommand{\ord}{\operatorname{ord}}
\newcommand{\Pic}{\operatorname{Pic}}
\newcommand{\PCF}{\operatorname{PCF}}
\newcommand{\PCP}{\operatorname{PCP}}
\newcommand{\PGL}{\operatorname{PGL}}
\newcommand{\Per}{\operatorname{Per}}
\newcommand{\Qbar}{{\bar{\QQ}}}
\newcommand{\Rat}{\operatorname{Rat}}
\newcommand{\rank}{\operatorname{rank}}
\newcommand{\res}{\operatornamewithlimits{res}}
\newcommand{\Resultant}{\operatorname{Res}}
\renewcommand{\setminus}{\smallsetminus}
\def\SetMap(#1,#2){[#1:#2]}  
\newcommand{\Support}{\operatorname{Support}}
\newcommand{\Span}{\operatorname{Span}}
\newcommand{\Sym}{\operatorname{Sym}}
\newcommand{\Spec}{\operatorname{Spec}}
\newcommand{\sm}{{\textup{sm}}}
\newcommand{\tors}{{\textup{tors}}}
\newcommand{\bfzero}{\boldsymbol0}
\newcommand{\Zbar}{{\bar{\ZZ}}}
\newcommand{\<}{\langle}
\renewcommand{\>}{\rangle}
\newcommand{\bfP}{{\boldsymbol P}}
\newcommand{\bfC}{{\boldsymbol C}}
\newcommand{\lcm}{\operatornamewithlimits{lcm}}

\title[Post-Critically Finite Maps on $\mathbb{P}^n$]
{Post-Critically Finite Maps on $\mathbb{P}^n$ for $n\ge2$ are Sparse}
\date{\today}

\author{Patrick Ingram}
\address{Department of Mathematics and Statistics, York University,
  N520 Ross, 4700 Keele Street, Toronto, ON M3J 1P3, Canada}
\email{pingram@yorku.ca}

\author{Rohini Ramadas}
\address{Mathematics Department, Box 1917, Brown University,
Providence, RI 02912 USA}
\email{rohini\underline{ }ramadas@brown.edu}

\author{Joseph H. Silverman}
\address{Mathematics Department, Box 1917, Brown University,
  Providence, RI 02912 USA
  (ORCID: 0000-0003-3887-3248)}
\email{jhs@math.brown.edu}

\subjclass{Primary: 37P05; Secondary:  37F10, 37F45, 37P45}

\keywords{post-critically finite}

\thanks{The first author's work was partially supported by Simons Collaboration
  Grant \#283120.
The second author's work was partially supported by NSF fellowship DMS-1703308.  
The third author's work was partially supported by Simons Collaboration
  Grant \#241309 and NSF EAGER DMS-1349908.}

\begin{abstract}
Let $f:{\mathbb P}^n\to{\mathbb P}^n$ be a morphism of degree
$d\ge2$. The map $f$ is said to be \emph{post-critically finite} (PCF)
if there exist integers $k\ge1$ and $\ell\ge0$ such that the critical
locus $\operatorname{Crit}_f$ satisfies
$f^{k+\ell}(\operatorname{Crit}_f)\subseteq{f^\ell(\operatorname{Crit}_f)}$. The
smallest such $\ell$ is called the \emph{tail-length}. We prove that
for $d\ge3$ and $n\ge2$, the set of PCF maps $f$ with tail-length at
most~$2$ is not Zariski dense in the the parameter space of all such
maps. In particular, maps with periodic critical loci, i.e., with
$\ell=0$, are not Zariski dense.
\end{abstract}

\maketitle

\tableofcontents

\newpage

\section{Introduction}
\label{section:introduction}

A rational map~$f:\PP^1\to\PP^1$ of degree~$d\ge2$ is said to be
\emph{post-critically finite} (PCF) if all of its critical points have
finite forward orbits.  PCF maps play a fundamental role in the study
of one-dimensional dynamics; see Remark~\ref{remark:pcf1} for a brief
history. In particular, PCF maps are ubiquitous in the sense that they are
Zariski dense in the parameter space of all degree~$d$ rational maps of~$\PP^1$,
and the same is true of the smaller collection of post-critically
periodic (PCP) maps, which are the maps whose critical points are
periodic; see~\cite[Theorem~A]{MR3831028}.

Forn{\ae}ss and Sibony~\cite{MR1190986} introduced an analogue of PCF
maps on~$\PP^n$ for~$n\ge2$, and a number of authors have constructed
examples of such maps and studied their properties;
see~\cite{MR2581826,MR1940161,MR2603592,MR3492630,MR1609475,MR3107522,MR2415046,MR1747345}
for examples in complex dynamics, and~\cite{MR3762687,MR3492630} for
some arithmetic results.  Our aim in this paper is to explain why it
is likely that the set of such maps is much sparser than in the
one-dimensional case, and to prove a result which quantifies this
statement for~PCF maps having small tail length.  We set the notation
\[
  \End_d^n := \left\{
    \begin{tabular}{@{}c@{}}
      morphisms $f:\PP^n\to\PP^n$ of algebraic\\
      degree $d$, i.e., $f^*\Ocal_\PP(1)=\Ocal_\PP(d)$\\
    \end{tabular}
  \right\}. 
\]
We note that~$\End_d^n$ is naturally identified with a Zariski open
subset of~$\PP^N$, where~$N=(n+1)\binom{d+n}{n}-1$. More precisely,
the variety~$\End_d^n$ is the complement of the hypersurface
in~$\PP^N$ defined by the vanishing of the Macaulay
resultant. See~\cite[Chapter~1]{MR2884382} for details.

In this paper we always work over\footnote{Some parts of this paper
  remain true over infinite fields of characteristic~$p$, but to avoid
  separability complications, we restrict to the case of
  characteristic~$0$.}
\[
\FF := \text{an algebraically closed field of characteristic~$0$.}
\]

\begin{definition}
The \emph{critical locus} of a map~$f=[f_0,\ldots,f_n]\in\End_d^n$
given by homogeneous polynomials~$f_i(x_0,\ldots,x_n)$ is the variety
\[
  \Crit_f := \left\{ \det\left( \frac{\partial f_i}{\partial x_j}\right) = 0 \right\}
  \subset \PP^n.
\]
The \emph{branch locus} of~$f$ is the image of the critical locus, taken with
the reduced scheme structure and denoted by
\[
  \Branch_f := f(\Crit_f).
\]
\end{definition}

\begin{definition}
\label{definition:PCF}
A map~$f\in\End_d^n$ is \emph{post-critically finite} (PCF) if
there exist~$k\ge1$ and~$\ell\ge0$ such that
\[
  f^{k+\ell}(\Crit_f) \subseteq f^\ell(\Crit_f).
\]
If~$k$ and~$\ell$ are chosen minimally, we say that~$f$ is \emph{PCF
  of Type~$(k,\ell)$}, where~$k$ is the \emph{period}
and~$\ell$ is the \emph{tail-length}. A PCF map with tail length~$0$ is
said to be \emph{post-critically periodic} (PCP).
\end{definition}

Our main theorem says that in dimension greater than one,
post-critically periodic maps are comparatively rare, and more
generally the same is true for post-critically finite maps whose
tail-length is at most~$2$.

\begin{theorem}
\label{theorem:PCPnotdenseinRatdn}
Let~$d\ge3$ and~$n\ge2$. Fix some $\ell\le2$. Then
\[
  \{ f\in\End_d^n : \text{$f$ is post-critically finite of Type $(k,\ell)$ for some $k\in\NN$} \}
\]
is contained in a proper Zariski closed subset of~$\End_d^n$.
\end{theorem}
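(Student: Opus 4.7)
The plan is to reformulate the bounded tail-length PCF condition as the existence of a forward-invariant hypersurface of bounded degree, and then to show that a generic $f\in\End_d^n$ admits no such invariant. Suppose $f\in\End_d^n$ is PCF of Type $(k,\ell)$ with $\ell\le 2$, and set $V:=f^\ell(\Crit_f)$ (reduced). The PCF condition $f^{k+\ell}(\Crit_f)\subseteq f^\ell(\Crit_f)$ is then $f^k(V)\subseteq V$, so $V$ is forward-invariant under $f^k$. From the cycle identity $f_*H=d^{n-1}H$ in the Chow ring of $\PP^n$ we have $\deg f(W)\le d^{n-1}\deg W$ for any irreducible hypersurface $W$; iterating at most twice from $\Crit_f$ (which has degree $(n+1)(d-1)$) gives
\[
  \deg V\;\le\;d^{2(n-1)}(n+1)(d-1)\;=:\;D_0(d,n),
\]
a bound depending only on $d$ and $n$. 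Hence every PCF map of the considered type lies in the image of the first-factor projection $\pi_1\colon\Ical_{e,k}\to\End_d^n$, where
\[
  \Ical_{e,k}\;:=\;\{(f,[V])\in\End_d^n\times\PP^{M(e)}\,:\,\deg V=e,\;f^k(V)\subseteq V\},
\]
for some $1\le e\le D_0$ and some $k\ge 1$. Here $\PP^{M(e)}$, with $M(e)=\binom{n+e}{e}-1$, parametrizes degree-$e$ hypersurfaces of $\PP^n$, and each $\Ical_{e,k}$ is Zariski closed.

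For a fixed pair $(e,k)$, the condition $f^k(V)\subseteq V$ is equivalent to $g\circ f^k\in(g)$, where $g$ is the defining form of $V$, and this imposes $\binom{d^ke+n}{n}-\binom{(d^k-1)e+n}{n}$ linear relations on the coefficients of the polynomial $g\circ f^k$. Since the choice of $g$ carries only $M(e)$ free parameters, a direct dimension count shows that $\overline{\pi_1(\Ical_{e,k})}$ is a proper closed subset of $\End_d^n$ for each individual $(e,k)$.

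\textbf{Main obstacle.} The principal difficulty is handling arbitrary periods $k$ uniformly: the argument above gives one proper closed subset per pair $(e,k)$, but the theorem requires a single proper closed subset of $\End_d^n$ containing all of them. My strategy is to exhibit a single explicit $f_0\in\End_d^n$ admitting no forward $f_0^k$-invariant hypersurface of degree $\le D_0$ for any $k\ge 1$, together with a polynomial criterion in the coefficients of $f$---for instance a resultant-type quantity built from the triple $(\Crit_f,\,f(\Crit_f),\,f^2(\Crit_f))$---whose nonvanishing at $f_0$ precludes the existence of any such invariant. Producing and verifying $f_0$, and extracting a uniform-in-$k$ polynomial criterion, is the technical heart of the proof and the step I expect to require the most care.
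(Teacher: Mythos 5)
Your reduction to forward-invariant hypersurfaces of bounded degree is correct (the degree bound via $f_*H=d^{n-1}H$ is right, and $\deg\Crit_f=(n+1)(d-1)$), and you have also correctly identified the crux: each $\pi_1(\Ical_{e,k})$ is individually proper and closed, but the theorem needs a \emph{single} proper closed set containing the countable union over all $k$. However, your proposal stops exactly at that crux. Exhibiting one $f_0$ that admits no $f_0^k$-invariant hypersurface of degree $\le D_0$ for any $k$ does not by itself give a Zariski-open neighborhood on which the same holds, and the ``resultant-type quantity built from $(\Crit_f,f(\Crit_f),f^2(\Crit_f))$'' is a placeholder rather than an argument: it is not clear how any finite collection of polynomial conditions in the coefficients of $f$, built from the first two iterates of $\Crit_f$, would preclude invariance of a degree-$e$ hypersurface under $f^k$ uniformly over all $k$. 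As written, the proposal reduces the theorem to a problem of essentially the same difficulty.

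The paper's mechanism for the uniform-in-$k$ step, which your plan never touches, is geometric rather than resultant-theoretic. One shows (Theorem~\ref{theorem:critgentype}, after Starr) that for generic $f$ the hypersurface $\Crit_f$ is irreducible and of general type; for $\ell\ge1$ one further arranges, via the hyperplane construction (Proposition~\ref{proposition:hyperplane}) and a specialization argument (Lemma~\ref{lem:gendegone}), that $f^\ell|_{\Crit_f}$ is generically $1$-to-$1$, so $f^\ell(\Crit_f)$ is birational to $\Crit_f$ and hence also of general type. If $f$ were PCF of tail-length $\ell$ with some period $k$, then $f^k$ would restrict to a surjective self-map of the general-type variety $f^\ell(\Crit_f)$; any such self-map is an automorphism, and the automorphism group of a variety of general type is finite, so some iterate $f^{kr}$ fixes $f^\ell(\Crit_f)$ pointwise. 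This contradicts Lemma~\ref{lemma:fixdim0}, which says $\Fix$ of a degree $\ge2$ endomorphism of $\PP^n$ is zero-dimensional. The conditions ``$\Crit_f$ irreducible of general type'' and ``$f^\ell|_{\Crit_f}$ generically $1$-to-$1$'' are Zariski-open, giving the required single open set with no quantifier over $k$. It is this general-type/finite-automorphism argument (not a resultant) that supplies the uniformity, and your proposal contains no substitute for it.
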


We conjecture that Theorem~\ref{theorem:PCPnotdenseinRatdn} is true
for any fixed tail-length, and we ask whether it remains valid for the union
over all tail-lengths.

\begin{conjecture}
\label{conjecture:PCPnotdenseinRatdn}
Let~$d\ge3$ and~$n\ge2$. Then for all~$\ell\ge1$,
\[
  \{ f\in\End_d^n : \text{$f$ is post-critically finite of Type $(k,\ell)$ for some $k\in\NN$} \}
\]
is contained in a proper Zariski closed subset of~$\End_d^n$.
\end{conjecture}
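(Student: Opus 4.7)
My plan is to extend the degree-growth analysis (suggested by Theorem~\ref{theorem:PCPnotdenseinRatdn}, which handles $\ell\le 2$) to arbitrary tail-length $\ell\ge 1$. For $f\in\End_d^n$, write $C_i := f^i(\Crit_f)$ as a reduced subscheme and $m_i := \deg(f|_{C_i})$ when $C_i$ is an irreducible hypersurface on which $f$ is dominant. The projection formula gives $\deg C_{i+1} = d\cdot\deg C_i/m_i$. If $f$ is PCF of type $(k,\ell)$ and $C_\ell$ is irreducible of dimension $n-1$, then $C_{k+\ell}\subseteq C_\ell$ forces $C_{k+\ell}=C_\ell$ (as reduced schemes) and hence the cycle identity
\[
  \prod_{i=\ell}^{\ell+k-1} m_i \;=\; d^k.
\]

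The first step is a Bertini-type generic-smoothness argument combined with an explicit model computation, producing for each fixed $\ell\ge 1$ a nonempty Zariski open $U_\ell\subseteq\End_d^n$ on which $C_0,\ldots,C_{\ell+1}$ are smooth irreducible hypersurfaces and each $f|_{C_i}$ for $i\le\ell$ is birational. On $U_\ell$ one has $\deg C_i = d^i(n+1)(d-1)$ for $i\le\ell+1$, a strictly increasing sequence, which immediately rules out PCF of type $(1,\ell)$: the cycle identity for $k=1$ would read $m_\ell=d$, contradicting $m_\ell=1$.

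For $k\ge 2$, the cycle identity with $m_\ell=1$ constrains the later factors $m_{\ell+1},m_{\ell+2},\ldots$ to multiply to $d^k$. To rule this out uniformly in $k$, I would propose to identify a single polynomial nonvanishing condition on the coefficients of $f$ --- encoding a transversality property of the ramification of $f$ along $\Crit_f$ --- whose truth propagates the birationality $m_i=1$ inductively to all $i\ge 0$. The main obstacle to completing this plan is justifying this inductive propagation: although each condition $\{m_i=1\}$ is individually Zariski open, the hypersurfaces $C_i$ are defined by equations of increasing complexity in the coefficients of $f$, so no direct inductive argument for uniform birationality is apparent. This same difficulty is presumably what restricts the present theorem to $\ell\le 2$, and overcoming it likely requires either a new structural understanding of how $f$ acts on the tower $\{C_i\}_{i\ge 0}$ --- perhaps via a cohomological or height-theoretic argument --- or an indirect construction bypassing explicit control of individual $m_i$.
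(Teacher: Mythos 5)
This statement is labeled \emph{Conjecture} in the paper: the authors do not prove it for general $\ell$, and Theorem~\ref{theorem:PCPnotdenseinRatdn} handles only $\ell\le2$. So there is no proof in the paper to compare with line by line. That said, the paper's Sections~\ref{section:fixedtaillength}--\ref{section:taillength2} contain a precise reduction strategy for general $\ell$, and your sketch departs from it in a way that makes your gap bigger, not smaller, than theirs.

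Your approach is degree bookkeeping along the tower $C_i=f^i(\Crit_f)$. Two small inaccuracies first: with $m_i=\deg(f|_{C_i})$, the projection formula gives $\deg C_{i+1}=d^{\,n-1}\deg C_i/m_i$, not $d\cdot\deg C_i/m_i$ (these agree only for $n=2$), and correspondingly the cycle identity reads $\prod_{i=\ell}^{\ell+k-1}m_i=d^{(n-1)k}$. But the real problem is structural and you identify it yourself: to rule out periods $k$ of \emph{every} length via the cycle identity you would need to control $m_i$ for all $i\ge\ell$ simultaneously, which is an intersection of infinitely many open conditions and hence not a single proper closed complement. No amount of finite degree bookkeeping rules out the existence, inside any fixed nonempty open set $U_\ell$, of a map whose cycle is very long and whose $m_i$ for $i$ far out conspire to satisfy the identity.

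The paper's mechanism (Proposition~\ref{prop:taillengthell}) avoids exactly this issue, and you should internalize it: one only needs to arrange, generically, that $\Crit_f$ is irreducible of general type (Theorem~\ref{theorem:critgentype}) and that $f^\ell|_{\Crit_f}$ is birational onto its image. Then $C_\ell$ is of general type, and if $f$ were PCF of tail-length $\ell$ with any period $k$, the map $f^k$ would be a surjective endomorphism of the general-type variety $C_\ell$, hence an automorphism of finite order, hence some iterate of $f$ fixes all of $C_\ell$ pointwise, contradicting $\dim\Fix(f^{m})=0$ (Lemma~\ref{lemma:fixdim0}). The period $k$ is disposed of uniformly by the finiteness of $\Aut$ of a general-type variety, so there is no need to track $m_i$ for $i$ in the cycle at all; only $m_0\cdots m_{\ell-1}$ matters. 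This collapses your ``uniform birationality for all $i$'' obstacle down to a \emph{single} existence problem: exhibit one $f_0\in\End_d^n$ for which an irreducible component $C$ of $\Crit_{f_0}$ satisfies conditions (1)--(4) of Proposition~\ref{prop:taillengthell}. The paper solves this by the explicit hyperplane construction for $\ell=1$ (Proposition~\ref{proposition:hyperplane}) and a $\PGL_{n+1}$-perturbation argument for $\ell=2$ (Lemma~\ref{lemma:rto1tosto1}, Lemma~\ref{lem:taillengthtwo}); for $\ell\ge3$ the construction is open, and that, rather than degree accounting, is where the real difficulty lies.

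In short: your $k=1$ observation is fine (modulo the exponent correction), but your plan for $k\ge2$ is not a viable route to the conjecture because it cannot produce a \emph{finite} set of open conditions, and you correctly flag that you cannot close this gap. The paper's general-type/finite-automorphism reduction is the structural idea that does close it, and you should recast your effort as: find an explicit $f_0$ with a minimally branched point for $f_0^\ell$ satisfying Proposition~\ref{prop:taillengthell}'s hypotheses for the given $\ell$.
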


\begin{question}
\label{question:PCPnotdenseinRatdn}
Let~$d\ge3$ and~$n\ge2$.  Is the set
\[
  \{ f\in\End_d^n : \text{$f$ is post-critically finite} \}
\]
contained in a proper Zariski closed subset of~$\End_d^n$?
\end{question}

\begin{remark}
  \label{remark:pcf1}
One motivation for studying PCF endomorphisms in higher dimensions
comes from work of Nekrashevych~\cite{MR3199801}, in which he studies
the Julia set of a PCF map $f:\PP^N_\CC\to \PP^N_\CC$ using an
associated iterated monodromy group. In~\cite{MR2581826}, Belk and
Koch explicitly compute the iterated monodromy group associated to a
particular example, which in fact turns out to be post-critically
periodic. We also mention that the algebraic analogue of the partial
self-covering property is exploited in~\cite{MR3762687} to show that
extensions of number fields obtained by adjoining backward orbits of
points relative to PCF endomorphisms of any smooth, projective variety
are finitely ramified.
\end{remark}

\begin{remark}
For ease of exposition, we work in the parameter space $\End_d^n$,
but we note that since the PCF property is invariant under
$\PGL_{n+1}$-con\-ju\-ga\-tion,
Theorem~\ref{theorem:PCPnotdenseinRatdn} could equally well be
formulated for the dynamical moduli
space~$\Moduli_d^n:=\End_d^n/\!/\PGL_{n+1}$ constructed via GIT
in~\cite{MR2741188,MR2567424}. And similarly for
Conjecture~\ref{conjecture:PCPnotdenseinRatdn} and
Question~\ref{question:PCPnotdenseinRatdn}.
\end{remark}

\begin{remark}
The property of being PCF as given in Definition~\ref{definition:PCF}
admits two other equivalent characterizations that are sometimes
useful. First, a map $f\in \End_d^n$ is PCF if and only if the
post-critical locus
\[
\operatorname{PostCrit}(f) := \bigcup_{m\ge 1} f^m(\Crit_f)
\]
is algebraic, that is, if~$\operatorname{PostCrit}(f)$ consists of a
finite union of algebraic hypersurfaces. This equivalence follows
immediately from the fact that for each~$m$, the image $f^m(\Crit_f)$ is a finite union of
algebraic hypersurfaces. Second, a map $f$ is PCF if and
only if there exists a Zariski-open subset $U\subseteq \PP^N$ such that
$f^{-1}(U)\subseteq U$ and such that $f:U\to \PP^N$ is unramified;
specifically, if such a $U$ exists, then its complement is algebraic
and contains the post-critical locus.
\end{remark}

We briefly summarize the contents of this paper. In
Section~\ref{section:examples} we give various constructions of~PCF
maps and non-PCF maps, and in particular show that for all~$d$
and~$n$, every period and tail length can occur.
In Section~\ref{section:detvars} we prove that there is a
Zariski dense set of~$f\in\End_d^n$ such that~$\Crit_f$ is a variety
of general type.  (We thank Jason Starr for showing us this proof.)
We use this in Section~\ref{section:pbpmapssparse} to show that the
set of~PCP maps, i.e., the set of maps~$f$ of PCF Type~$(k,0)$, is not
Zariski dense. Section~\ref{section:twomultiplicitylemmas} contains
two multiplicity lemmas. In
Section~\ref{section:hyperplaneconstruction} we construct maps whose
branch locus has a minimally branched point and use this map to show
that the set of~$f$ of PCF Type~$(k,1)$ is not Zariski
dense. Section~\ref{section:fixedtaillength} gives a general method
for proving, for any fixed~$\ell$, that the set of~$f$ of PCF
Type~$(k,\ell)$ is not Zariski dense. This method requires showing
that there exists a single map having certain properties. In
Section~\ref{section:taillength2} we construct such a map
for~$\ell=2$, thereby completing the proof that the set of~$f$ of PCF
Type~$(k,2)$ is not Zariski dense.

\section{Examples of PCF maps}
\label{section:examples}

Before proving our main results on higher dimensioal PCF maps, we
pause in this section to give a number of examples. We remark that in
all of these examples, the critical locus~$\Crit_f$ is reducible, and
indeed it is generally a union of rational hypersurfaces, the
multiplicity~$\Mult_{\Crit_f}(f)$ is strictly greater than~$2$ and
generally equal to~$\deg(f)$, and the restriction
$f|_{\Crit_f}:\Crit_f\to\Branch_f$ is generally not~$1$-to-$1$.  This
highlights the difficulty of constructing maps whose critical and
branch loci are sufficiently generic, and the existence of such maps
is the key to proving results such as
Theorem~\ref{theorem:PCPnotdenseinRatdn}.

\begin{example}\label{ex:dthpower}
The most obvious PCF map is the $d$-power map
\[
\text{$f=[x_0^d,\ldots,x_n^d]$ with critical locus $\Crit_f=\{x_0x_1\cdots{x_n}=0\}$}
\]
consisting of the coordinate hyperplanes. Thus $f(\Crit_f)=\Crit_f$,
so~$f$ is~PCF of Type~$(1,0)$.
\end{example}

\begin{example}[Symmetric powers of PCF maps on $\PP^1$]  
\label{example:sympowerpcf}  
Let $f:\PP^1\to\PP^1$ be a PCF map of degree $d$. Then the $n$-fold product map,
which we denote by
\[
F_n := f\times f\times\cdots\times f :(\PP^1)^n\longrightarrow(\PP^1)^n,
\]
descends to a map on the symmetric prodcut $(\PP^1)^n/\Scal_n$. Using
the standard isomorphism $\PP^n\cong(\PP^1)^n/\Scal_n$, we obtain a
map~${\tilde{F}_n}$ on~$\PP^n$ such that the diagram in
Figure~\ref{figure:SymmetricPower} commutes.
\begin{figure}[ht]
\[
\begin{CD}
(\PP^1)^n @>F_n>> (\PP^1)^n \\
@V \pi VV
@VV \pi V \\
\PP^n @>{\tilde{F}_n}>> \PP^n \\
\end{CD}
\]
\caption{The symmetric power of a PCF map}
\label{figure:SymmetricPower}
\end{figure}

The vertical quotient map, denoted by~$\pi$ in
Figure~\ref{figure:SymmetricPower}, is $n!$-to-$1$, and its critical
locus is the big diagonal in~$(\PP^1)^n$, i.e.,
\[
\Crit_\pi = \bigl\{ (p_1,\ldots,p_n)\in(\PP^1)^n : \text{$p_i=p_j$ for some $i\ne j$} \bigr\}.
\]
We denote the branch of~$\pi$ by
\[
\Branch_\pi := \pi(\Crit_\pi) \subset \PP^n.
\]
We observe that $\Crit_\pi$ is reducible and that its irreducible
components are rational, but all of the components have the same image
in~$\PP^n$.  Thus $\Branch_\pi$ is an irreducible rational
hypersurface in~$\PP^n$.  Further, since the topological degree of
${\tilde{F}_n}$ is equal to the topological degree of~$F_n$, which
is~$d^n$, we see that the algebraic degree of~${\tilde{F}_n}$ is~$d$.

The postcritical portrait of a PCF map is the self-map of finite sets
induced on the set of irreducible components of the postcritical
locus. We now describe how the postcritical portrait
of~${\tilde{F}_n}$ can be deduced from the postcritical portrait
of~$f$.  We are assuming that every~$p\in\Crit_f$ is pre-periodic
under~$f$, and we denote the tail-length and period of~$p$ by~$\ell_p$
and~$k_p$, respectively.

The commutative diagram in Figure~\ref{figure:SymmetricPower} and the
chain rule give
\[
\Crit_{{\tilde{F}_n}}\subseteq \pi(\Crit_{F_n}\cup(F_n)^{-1}(\Crit_{\pi}))
\quad\text{and}\quad
\pi((\Crit_{F_n}\cup(F_n)^{-1}(\Crit_{\pi}))\setminus \Crit_{\pi})\subseteq \Crit_{{\tilde{F}_n}}.
\]
For~$p\in\PP^1$ we denote by~$H_p$ the reducible hypersurface
\[
\bigl\{(p_1,\ldots p_n) : \text{$p_i=p$ for some $i$} \bigr\} \subset (\PP^1)^n,
\]
and we denote by ${\tilde{H}}_p:=\pi(H_p)$ its irreducible, rational
image in~$\PP^n$. We have that
\[
\Crit_{F_n}=\bigcup_{p\in\Crit_f} H_p.
\]
Since $H_p$ is not contained in $\Crit_{\pi}$, we
conclude that ${\tilde{H}}_p\subseteq\Crit_{{\tilde{F}_n}}$, so
$\pi(\Crit_{F_n})\subseteq \Crit_{{\tilde{F}_n}}$. We denote by
$E$ the reducible hypersurface
\[
E := 
\bigcup_{i\ne j} \bigl\{(p_1,\ldots p_n) : \text{$p_i\ne p_j$ and $f(p_i)=f(p_j)$}\bigr\}\subseteq (\PP^1)^n,
\]
and we set $\tilde{E}:=\pi(E)\subseteq\PP^n$.
We have
\[
F_n(\Crit_\pi)=\Crit_\pi \quad\text{and}\quad
(F_n)^{-1}(\Crit_{\pi}) = \Crit_\pi \cup E,
\]
which implies that
\[
\tilde{E}\subseteq \Crit_{{\tilde{F}_n}},\quad
{\tilde{F}_n}(\tilde{E})=\Branch_\pi,\quad\text{and}\quad
{\tilde{F}_n}(\Branch_\pi)=\Branch_\pi.
\]
Thus $\tilde{E}$ is pre-periodic with tail-length and period both equal to~$1$.

We know that $\Crit_\pi\not\subseteq\Crit_{F_n}$, so by comparing the
multiplicities along $\Crit_\pi$ of $\pi\circ F_n$ and
${\tilde{F}_n}\circ\pi$, we see that
$\Branch_\pi=\pi(\Crit_\pi)\not\subseteq\Crit_{{\tilde{F}_n}}$. This
lets us conclude that
\[
\Crit_{{\tilde{F}_n}}= \tilde{E} \cup \bigcup_{p\in\Crit_f} {\tilde{H}}_p.
\]

For all $p\in\PP^1$ we have $F_n(H_p)=H_{f(p)}$, and thus
${\tilde{F}_n}({\tilde{H}}_p)={\tilde{H}}_{f(p)}$. Hence the irreducible
component ${\tilde{H}}_p$ of $\Crit_{{\tilde{F}_n}}$ is pre-periodic with
tail-length $\ell_p$ and period $k_p$. Putting this all together, we
deduce that for all $n\ge2$, the map~${\tilde{F}_n}$ is PCF of Type
$(k,\ell)$ with
\begin{equation}
  \label{eqn:kellfsymn}
  k=\lcm_{p\in\Crit_f}k_p
  \quad\text{and}\quad
  \ell=\max\bigl( 1, \max_{p\in\Crit_f}\{\ell_p\} \bigr).
\end{equation}
\end{example}

As a special case of Example~\ref{example:sympowerpcf}, we obtain the
following result.

\begin{proposition}
For all~$n\ge1$, all~$d\ge2$, all~$k\ge1$ and all~$\ell\ge1$, there
exists a PCF map of Type~$(k,\ell)$ in~$\End_d^n$.
\end{proposition}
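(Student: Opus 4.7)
The plan is to reduce the proposition to a one-dimensional existence question and then invoke the symmetric-power construction of Example~\ref{example:sympowerpcf}. The key input I need is, for each $d\ge2$, $k\ge1$, and $\ell\ge1$, a degree-$d$ PCF map $f:\PP^1\to\PP^1$ whose critical orbit data realize the target type $(k,\ell)$ (directly when $n=1$, and through the formula~\eqref{eqn:kellfsymn} when $n\ge2$).

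For the one-dimensional input I would use the family of monic polynomials $f_c(z)=z^d+c$ with $c\in\Fbar$. Its critical set on $\PP^1$ consists of $z=0$ (with multiplicity $d-1$) together with the superattracting fixed point at infinity, which has type $(1,0)$; hence the type of $f_c$ as a PCF map is determined entirely by the orbit of $0$. Since $f_c^m(0)$ is a polynomial in $c$ of degree $d^{m-1}$ for $m\ge1$, the equation $f_c^{k+\ell}(0)-f_c^{\ell}(0)=0$ is a non-constant polynomial condition on $c$ and therefore has solutions in $\Fbar$; excluding the finitely many solutions that also satisfy a reduced equation corresponding to a strictly smaller type $(k',\ell')$ with $k'\mid k$ and $\ell'\le\ell$ leaves parameters realizing the orbit of $0$ as strictly pre-periodic of exact type $(k,\ell)$ (or strictly periodic of exact period $k$, in the $\ell=0$ case). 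For $n=1$, the choice with $\ell\ge1$ already provides the desired PCF map.

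For $n\ge2$ I would feed these one-dimensional maps into the symmetric-power construction. If $\ell\ge2$, choose $f$ of exact type $(k,\ell)$; the infinity critical point contributes $(k_\infty,\ell_\infty)=(1,0)$, so formula~\eqref{eqn:kellfsymn} yields
\[
\lcm(k,1)=k \quad\text{and}\quad \max\bigl(1,\max(\ell,0)\bigr)=\ell,
\]
giving $\tilde{F}_n\in\End_d^n$ of type $(k,\ell)$. If $\ell=1$, choose $f$ to be post-critically periodic on $\PP^1$ with some critical point of exact period $k$, so that all $\ell_p=0$; formula~\eqref{eqn:kellfsymn} then yields period $k$ and tail-length $\max(1,0)=1$, producing $\tilde{F}_n$ of type $(k,1)$. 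The only substantive step throughout is the one-dimensional existence argument, which reduces to verifying that the displayed polynomial in $c$ is non-constant and that parameters of strictly smaller type form a proper algebraic subset of its vanishing locus.
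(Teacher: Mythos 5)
Your argument is correct and follows essentially the same route as the paper: realize the desired type with a unicritical polynomial $z^d+c$ on $\PP^1$ and then push it through the symmetric-power construction using formula~\eqref{eqn:kellfsymn}. The only difference is that your case split between $\ell=1$ and $\ell\ge2$ is unnecessary — picking $c$ so that $0$ has exact type $(k,\ell)$ works uniformly for all $\ell\ge1$, since $\max(1,\ell)=\ell$ — but it is not incorrect.
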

\begin{proof}
It is known that for all~$d\ge2$, all~$k\ge1$ and all~$\ell\ge1$,
there exists a PCF map~$f$ of Type~$(k,\ell)$ in~$\End_d^1$
such that~$f$ has exactly two critical points, one
fixed, and one pre-periodic of Type~$(k,\ell)$. More precisely, one
can take~$f(x)=x^d+c$ for an appropriate choice of~$c$.
It follows from~\eqref{eqn:kellfsymn} that~${\tilde{F}_n}\in\End_d^n$
is PCF of Type~$(k,\ell)$.
\end{proof}

\begin{example}
Koch~\cite{MR3107522} has used Teichm\"uller theory and Thurston's
topological characterization of PCF maps on~$\PP^1$~\cite{MR1251582}
to construct interesting PCF maps in all dimensions and degrees.  We
give a brief overview.

Let~$\phi:S^2 \to S^2$ be a degree~$d$ orientation preserving
branched covering from a topological~$2$-sphere to itself. Suppose
further that~$\phi$ is PCF, i.e., has finite post-critical set
\[
\bfP:=\bigl\{\phi^n(x) : \text{ $x$ is a critical point of $\phi$ and $n>0$}\bigr\}.
\]
Then that there is a holomorphic pullback map
\[
\sigma_\phi : \Tcal(S^2,\bfP)\longrightarrow\Tcal(S^2,\bfP)
\]
induced by~$\phi$ on the Teichm\"uller space~$\mathcal{T}(S^2,\bfP)$
of complex structures on~$(S^2,\bfP)$, and the following are
equivalent:
\begin{parts}
  \Part{\textbullet}
  The branched covering~$\phi$ is homotopic to a PCF rational function
  on~$\PP^1(\CC)$.
  \Part{\textbullet}
  $\sigma_{\phi}$ has a fixed point.
\end{parts}
See~\cite{MR1251582} for details of this result, which is due to
Thurston. Teichm\"uller space $\mathcal{T}(S^2,\bfP)$ is a
non-algebraic complex manifold, but it is the universal cover of the
algebraic moduli space~$\Moduli_{0,\bfP}$ of markings
of~$\PP^1(\CC)$ by the set~$\bfP$. In turn,
$\Moduli_{0,\bfP}$ can be identified with a Zariski open subset of
$\PP^{\abs{\bfP}-3}$. The complement of~$\Moduli_{0,\bfP}$ in
$\PP^{\abs{\bfP}-3}$ is a union of hyperplanes:
\[
\Delta:=
\bigcup_{0\le i\le\abs{\bfP}-2} \{x_i=0\}
\cup \bigcup_{0\le i<j\le\abs{\bfP}-2} \{x_i=x_j\}.
\]

In~\cite{MR3107522}, Koch introduced PCF endomorphisms of
$\PP^{\abs{\bfP}-3}$ that descend from the transcendental Thurston
pullback map. Suppose~$\phi:(S^2,\bfP)\to(S^2,\bfP)$ is PCF and
satisfies:
\begin{enumerate}
\item $\bfP$ contains a totally ramified fixed point~$p_\infty$ of
  $\phi$, i.e.,~$\phi$ is a \textit{topological polynomial}.
\item Either:
\begin{enumerate}
\item Every other critical point of~$\phi$ is also periodic, i.e.,~$\phi$ is PCP.
\item There is exactly one other critical point,~$p_0$, of~$\phi$,
  so in particular~$p_0$ is pre-periodic. \label{it:PCFbranched}
\end{enumerate}
\end{enumerate}
Then the inverse of~$\sigma_{\phi}$ descends to~$\PP^{\abs{\bfP}-3}$;
i.e., there is a degree~$d$ PCF map
$R_{\phi}:\PP^{\abs{\bfP}-3}\to\PP^{\abs{\bfP}-3}$ such that the
diagram in Figure~\ref{figure:TS2P} commutes:

\begin{figure}[ht]
\[
\begin{CD}
\Tcal(S^2,\bfP) @>\sigma_\phi>> \Tcal(S^2,\bfP) \\
@V \substack{\text{universal}\\\text{cover}\\} VV
@VV \substack{\text{universal}\\\text{cover}\\} V \\
\Mcal_{0,\bfP} @. \Mcal_{0,\bfP} \\
@V \substack{\text{open}\\\text{inclusion}\\} VV
@VV \substack{\text{open}\\\text{inclusion}\\} V \\
\PP^{\abs{\bfP}-3} @< R_\phi << \PP^{\abs{\bfP}-3}\\
\end{CD}
\]
\caption{A PCF map via Teichm\"uller theory}
\label{figure:TS2P}
\end{figure}

Koch describes the critical locus of~$R_{\phi}$ as well as its
postcritical portrait. The critical locus of
$R_{\phi}$ is contained in~$\Delta$, and thus it is a union of
hyperplanes. In particular,~$R_\phi$ is reducible, and no component is
of general type. The postcritical locus of~$R_{\phi}$ is equal
to~$\Delta$, and thus is a union of
exactly~$N:=(\abs{\bfP}-2+\binom{\abs{\bfP}-2}{2})$ hyperplanes. Hence
there are upper bounds on the possible period~$k$ and
tail-length~$\ell$ of the PCF map~$R_{\phi}$, in terms of the
dimension~$n=\abs{\bfP}-3$ of the projective space.

The postcritical portrait of~$R_{\phi}$ can be deduced from the
postcritical portrait of~$\phi$; for a complete description
see~\cite[Propositions~6.1 and~6.2]{MR3107522}. In particular,
$R_{\phi}$ is PCP if and only if~$\phi$ is PCP, and in this case for
every periodic critical point~$p\ne p_{\infty}$ of~$\phi$, there is at
least one critical hyperplane of~$R_{\phi}$ in a periodic cycle of the
same length as the cycle of~$p$. It follows from the proof
of~\cite[Proposition 6.1]{MR3107522} that if~$R_\f:\PP^n\to\PP^n$ is
PCF, then the components of its critical locus have period at
most~$\lceil{n}^2/4+n\rceil$.

Next suppose that~$\phi$ is not PCP, so we assume that it satisfies
Condition~\ref{it:PCFbranched} given earlier, i.e., there are exactly
two critical points~$p_0$ and~$p_{\infty}$ of~$\phi$, with~$p_\infty$
fixed, and~$p_0$ pre-periodic. In this case, $R_{\phi}$ can be written
as $\alpha\circ f$, where $f$ is the $d$-th power map described in
Example \ref{ex:dthpower}, and $\alpha$ is some automorphism of
$\PP^n$. Thus $\Crit_{R_{\phi}}$ is the union of the coordinate
hyperplanes $H_i:=\{x_i=0\}$ for $0\le{i}\le{n}$. Each $H_i$
eventually maps into a periodic cycle of hyperplanes in $\Delta$, but
no $H_i$ is itself periodic. It follows from counting the number of
hyperplanes in $\Delta$ that if $R_{\phi}$ is PCF Type~$(k,\ell)$ then
$1\le k,\ell\le \binom{n+1}{2}$.
\end{example}

\begin{example}
Although our contention is that PCF maps are rare, it is perhaps not
obvious that there exist any maps that are not PCF. So we take the
time here to construct examples of non-PCF maps in~$\End_d^n$ for
all~$d\ge2$ and all~$n\ge1$.  We consider the family of morphisms
\[
f_t:\PP^n\longrightarrow\PP^n,\quad
f_t(X_0,\ldots,X_n) := [X_0^d-tX_1^d,X_1^d,X_2^d,\ldots,X_n^d].
\]
The critical locus of~$f_t$ is
\[
\Crit_{f_t} = \{ X_0X_1\cdots X_n = 0 \}.
\]
In particular, the hyperplane
\[
H_0 := \{ X_0=0 \} \subset \PP^n
\]
is a component of~$\Crit_{f_t}$. We claim that for all values of~$t$ and all~$k\ge0$,
the image~$f^k(H_0)$ is a hyperplane, and that
for most~$t$ values, the sequence of hyperplanes~$f^k(H_0)$ does not repeat.
To see this,  we define a new map
\[
\f : \PP^1\longrightarrow\PP^1,\quad
\f(t) = t^d+t.
\]
An easy induction shows that
\[
f^k(H_0) = \bigl\{ X_0 = \f^{k-1}(t)\cdot X_1 \bigr\} \subset \PP^n
\quad\text{for all $k\ge1$.}
\]
Thus~$f^k(H_0)$ is a hyperplane for all~$k\ge0$, and if we choose
any~$t\in\FF$ that is not preperiodic for~$\f$, then the
hyperplanes~$f^k(H_0)$ will be distinct.  So we are reduced to showing
that the map~$\f$ has non-preperiodic points in the algebraically
closed characteristic~$0$ field~$\FF$.  If~$\FF=\CC$, this is obvious,
since the set of preperiodic points is countable. For a countable
field such as~$\Qbar$, one can use the fact that the preperiodic
points are a set of bounded height, so there are non-preperiodic
points for~$\f$ in~$\Qbar$, and indeed in~$\ZZ$.  We leave the general case
to the reader.

We observe as an immediate consequence that for any fixed~$k$
and~$\ell$, the set
\begin{equation}
  \label{eqn:extykl}
  \{ f\in\End_d^n : \text{$f$ is post-critically finite of type $(k,\ell)$} \}
\end{equation}
is not Zariski dense in~$\End_d^n$. This follows, since elimination
theory says that the set~\eqref{eqn:extykl} is Zariski closed, and our
example says that the complement of~\eqref{eqn:extykl} is non-empty,
so~\eqref{eqn:extykl} is not all of~$\End_d^n$. Of course, the fact
that~\eqref{eqn:extykl} is not Zariski dense for each fixed
pair~$(k,\ell)$ is much weaker than
Theorem~\ref{theorem:PCPnotdenseinRatdn}, which implies that
if~$\ell\le2$, then the set~\eqref{eqn:extykl} is empty for all but
finitely many~$k$.  And an affirmative answer to
Question~\ref{question:PCPnotdenseinRatdn} would be even stronger,
since it would say that~\eqref{eqn:extykl} is empty for all but
finitely many~$(k,\ell)$ pairs.
\end{example}


\section{Determinental varieties are of general type}
\label{section:detvars}
A key tool in the proof that PCP maps are sparse is the following
result, whose proof was shown to us by Jason Starr.

\begin{theorem}
\label{theorem:critgentype}
Let~$n\ge2$ and~$d\ge3$. Then the set
\[
  \{ f\in\End_d^n : \text{$\Crit_f$ is an irreducible variety of general type} \}
\]
is a non-empty Zariski open subset of~$\End_d^n$.
\end{theorem}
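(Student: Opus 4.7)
Proof plan. The critical locus $\Crit_f$ is cut out by $\det J_f$, a homogeneous polynomial of degree $m := (n+1)(d-1)$, so whenever $\det J_f\not\equiv 0$ it is a hypersurface in $\PP^n$ of this degree. By adjunction, a smooth hypersurface of degree $m$ in $\PP^n$ has canonical sheaf $\Ocal_{\Crit_f}(m-n-1)$, which is ample precisely when $m\ge n+2$, i.e.\ (since $m=(n+1)(d-1)$) precisely when $d\ge 3$; hence smoothness combined with $d\ge 3$ forces $\Crit_f$ to be of general type. Moreover, for $n\ge 2$ the exact sequence
\[
0\to\Ocal_{\PP^n}(-m)\to\Ocal_{\PP^n}\to\Ocal_{\Crit_f}\to 0
\]
together with $H^1(\PP^n,\Ocal(-m))=0$ yields $H^0(\Crit_f,\Ocal_{\Crit_f})=\FF$, so a smooth $\Crit_f$ is connected and hence irreducible. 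These reductions show that it suffices to establish that
\[
U:=\{f\in\End_d^n:\Crit_f\text{ is smooth}\}
\]
is a non-empty Zariski open subset of $\End_d^n$.

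Openness of $U$ is immediate: the incidence variety
\[
\Sigma:=\{(f,p)\in\End_d^n\times\PP^n:\det J_f(p)=0\text{ and }\partial_i\det J_f(p)=0\text{ for all }i\}
\]
is Zariski closed in $\End_d^n\times\PP^n$, and $U$ is the complement in $\End_d^n$ of the image of $\Sigma$ under the proper projection to the first factor.

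To show $U\ne\emptyset$ the plan is to bound $\dim\Sigma$ by a fiber count over the projection $\Sigma\to\PP^n$. Fix $p\in\PP^n$ and let $\Sigma_p\subset\End_d^n$ denote the fiber. The defining conditions on $\Sigma_p$ amount to $n+2$ polynomial equations in the coefficients of $f$, one of which is redundant by Euler's identity $m\det J_f=\sum_i x_i\partial_i\det J_f$, leaving $n+1$ equations. The key step is to verify that these cut out a subvariety of codimension exactly $n+1$ in $\End_d^n$. Since the map $f\mapsto\Crit_f$ intertwines the conjugation action of $\PGL_{n+1}$ on $\End_d^n$ with the (transitive) standard action on $\PP^n$, all the fibers $\Sigma_p$ are isomorphic, so it suffices to check the codimension bound at the single point $p_0:=[1:0:\cdots:0]$. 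Granted this, $\dim\Sigma\le \dim\End_d^n+n-(n+1)=\dim\End_d^n-1$, so $\Sigma$ does not project onto $\End_d^n$ and $U$ is non-empty.

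The main obstacle is the explicit codimension verification at $p_0$. The entries of $J_f(p_0)$ are, up to nonzero constants, precisely the coefficients of the monomials $x_0^d$ and $x_0^{d-1}x_j$ ($j=1,\ldots,n$) in the component polynomials $f_i$; meanwhile, each $\partial_i\det J_f(p_0)$ involves additionally the ``second-order'' coefficients, those of $x_0^{d-2}x_ix_j$ in the $f_k$. The verification thus reduces to choosing the first-order coefficients of $f$ so that $J_f(p_0)$ has rank exactly $n$ (so that exactly one of its $(n{+}1)\times(n{+}1)$ cofactors is nonzero), and then exhibiting perturbations in the second-order coefficients of a single component $f_k$ that realize an arbitrary vector in $\FF^{n}$ as $(\partial_i\det J_f(p_0))_{i=1,\ldots,n}$. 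This concrete linear-algebra computation, while mechanical, forms the technical heart of the argument and is the place where the hypotheses $d\ge 3$ and $n\ge 2$ make themselves felt through dimension counting.
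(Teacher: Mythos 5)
Your reduction to smoothness is a dead end for most of the range $n\ge2$. You claim it suffices to show
\[
U=\{f\in\End_d^n:\Crit_f\text{ is smooth}\}
\]
is non-empty, and you propose a dimension count on the incidence variety $\Sigma$ that would prevent $\Sigma$ from dominating $\End_d^n$. But the critical locus $\Crit_f$ is a determinantal hypersurface, and such hypersurfaces are \emph{generically singular} once $n$ is large. Concretely, the determinant hypersurface $\{\det M=0\}\subset\Mat_{(n+1)\times(n+1)}$ is singular along the corank-$\ge2$ stratum, which has codimension~$3$ \emph{within} the hypersurface; pulling this stratification back via $p\mapsto J_f(p)$, the singular locus of $\Crit_f$ has codimension~$3$ in $\Crit_f$ (hence dimension $n-4$) for a general $f$. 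This locus is therefore non-empty once $n\ge4$ (the acknowledgements of the paper record Starr's observation that this already happens for $n\ge3$), so $U$ is actually \emph{empty} in that range and $\Sigma$ \emph{does} surject onto $\End_d^n$. The dimension count you defer as the ``technical heart'' cannot be closed: the equations $\det J_f(p)=0$, $\nabla(\det J_f)(p)=0$ are not transverse precisely because of the determinantal structure, and the expected-codimension calculation fails. Your adjunction and connectedness reductions are correct as far as they go, and for $n=2$ a smoothness-based argument \emph{is} available (the paper proves Theorem~\ref{theorem:A2case} this way via Ciliberto–Flamini), but the global strategy collapses for $n\ge4$.

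The paper's actual proof circumvents this obstruction entirely. It never attempts to achieve smoothness; instead it cites Vainsencher and Starr to the effect that the generic determinantal variety $\Dcal=\{\det M=0\}$ has \emph{canonical} singularities. Because the total space of the incidence correspondence is smooth over the parameter space, the preimage of $\Dcal$ inherits canonical singularities, and hence the geometric generic critical locus has canonical singularities. For a variety with canonical singularities, ampleness of the dualizing sheaf still implies general type — and here the dualizing sheaf is $\Ocal_{\PP^n}\bigl((n+1)(d-2)\bigr)\big|_{\Crit_f}$, which is ample exactly when $d\ge3$, matching the numerical computation you did make via adjunction. So the correct role for your adjunction computation is to establish ampleness of the dualizing sheaf, with ``canonical singularities'' replacing ``smooth'' as the input that makes ampleness of $\omega$ imply general type.
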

\begin{proof}
The generic determinantal variety
\[
  \Dcal 
   = \bigl\{M\in\Mat_{(n+1)\times (n+1)}(\FF)\cong\FF^{(n+1)^2} : \det(M)=0 \bigr\}
\]
is singular, but its singularities are relatively mild.
More precisely, the generic determinantal variety~$\Dcal$ is
canonical, and thus all global sections of (positive powers of) the
dualizing sheaf on the singular determinantal variety lift to global
sections (rather than to rational/mero\-morphic sections) of (positive
powers of) the dualizing sheaf on any desingularization.  This follows
from results of Vainsencher~\cite{MR738261}, who describes an explicit
desingularization~$\tilde\Dcal$ of~$\Dcal$ as the space of complete
linear collineations. The result is also stated explicitly and proven in 
the preprint of Starr~\cite[Corollary~3.14]{arxiv0305432}.
\par
Since~$\Dcal$ has canonical singularities, and since the total space
of the incidence correspondence is smooth over the parameter space of
matrices, it follows that the inverse image of~$\Dcal$ in this total
space also has canonical singularities. Thus when we project this
total space to the parameter space of~$(n+1)$-tuples of homogeneous
degree~$d$ polyomials, the (geometric) generic fiber has canonical
singularities. Hence the open set of the parameter space consisting
of fibers that have canonical singularities is dense. 
\par
Since these fibers have canonical singularities, they are of general
type once the dualizing sheaf is ample. But for degree~$d$ maps
$\PP^n\to\PP^n$, the dualizing sheaf of the critical locus is the
restriction of
\[
  \Ocal_{\PP^n}\bigl((n+1)(d-2)\bigr).
\]
Hence if~$d\ge3$, then a general~$(n+1)$-tuple of degree~$d$
homogeneous polynomials has a critical locus whose desingularization
is of general type.
\end{proof}

Theorem~\ref{theorem:critgentype} covers maps of degree~$d\ge3$ for
all dimensions~$n$.  For dimension~$2$ we can prove something stronger
that includes quadratic maps.

\def\EEE{\Ecal_d^{\textup{sm-irr}}}

\begin{theorem}
\label{theorem:A2case}
We consider the set of maps
\[
  \EEE := 
  \{ f\in\End_d^2 : \text{$\Crit_f$ is smooth and irreducible} \}.
\]
\vspace{-10pt}
\begin{parts}
  \Part{(a)}
  Let $d\ge1$. Then~$\EEE$ is a non-empty
  Zariski open subset of~$\End_d^2$.
  \Part{(b)}
  Let $d\ge2$. Then~$\EEE$ does not contain any PCP maps.
  \Part{(c)}
  For all $d\ge2$, the set
  \[
  \{f\in\End_d^n:\text{$f$~is~PCP}\}
  \]
  is not Zariski dense in~$\End_d^n$.
\end{parts}
\end{theorem}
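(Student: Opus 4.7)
The plan is to prove parts (a), (b), (c) in sequence, with (c) following formally from the other two.

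For part~(a), openness of $\EEE$ in $\End_d^2$ is a standard consequence of the facts that smoothness of the plane curve $\Crit_f$ is an open condition (non-vanishing of a discriminant in the coefficients of the Jacobian polynomial $\det(\partial f_i/\partial x_j)$) and that, among smooth plane curves of fixed degree, irreducibility is preserved under deformation. For non-emptiness with $d\ge 3$, Theorem~\ref{theorem:critgentype} already produces maps with $\Crit_f$ of general type, in particular smooth and irreducible. For $d=2$, I would verify directly that $f=[x_0^2+x_1x_2,\,x_1^2+x_0x_2,\,x_2^2+x_0x_1]$ has Jacobian determinant proportional to the Hesse-type cubic $x_0^3+x_1^3+x_2^3-5x_0x_1x_2$, which is smooth since the Hesse pencil $x_0^3+x_1^3+x_2^3+\lambda x_0x_1x_2$ is singular only when $\lambda^3=-27$. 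The case $d=1$ is vacuous since $\Crit_f=\emptyset$.

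For part~(b), assume for contradiction that $f\in\EEE$ is PCP with $d\ge 2$, and set $C=\Crit_f$. Since $C$ is irreducible and $f$ is finite, $f^k(C)=C$ for some $k\ge 1$. Put $Y_i=f^i(C)$ and $m_i=\deg(f|_{Y_{i-1}})$; the projection formula yields $\deg Y_i=d^i\deg(C)/\prod_{j\le i}m_j$, so the condition $Y_k=C$ forces $\prod_{i=1}^k m_i=d^k$. When $d\ge 3$ the smooth plane curve $C$ has geometric genus $\binom{3d-4}{2}\ge 10$, so a standard Hurwitz argument forces every surjective endomorphism of $C$ to be an automorphism; hence $\prod m_i=\deg(f^k|_C)=1$, contradicting $d^k\ge 3$. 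When $d=2$ the curve $C$ is an elliptic plane cubic and the genus argument fails. Here I would exploit the fact that the Jacobian determinant is a single cubic defining $C$ scheme-theoretically with multiplicity one, so $f$ is simply ramified along $C$, and derive the divisorial identity $f^*(\Branch_f)=2C+R$ with $R$ effective; comparing classes forces $m_1\in\{1,2\}$. Combining this with the identity $\prod m_i=2^k$ and the integrality of each $\deg Y_i$ along the orbit, the remaining arithmetic possibilities can be ruled out case by case.

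Part~(c) is immediate: by~(a), $\End_d^2\setminus\EEE$ is a proper Zariski-closed subset of $\End_d^2$, and by~(b) it contains the entire PCP locus; hence the PCP locus is not Zariski dense. The main obstacle is the $d=2$ case of part~(b): unlike $d\ge 3$, where the critical curve has genus at least two and admits only automorphic surjective endomorphisms, an elliptic curve may admit degree-$2^k$ isogenies (for instance when it has complex multiplication), so endomorphism rigidity is unavailable. The proof in this case relies on exploiting the precise ``double-cover branched along a smooth cubic'' structure together with a combinatorial analysis of how the degree sequence $\deg(Y_i)$ can evolve along a periodic critical orbit in $\PP^2$.
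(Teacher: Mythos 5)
Your outline of part~(c) is exactly the paper's argument: a non-empty Zariski-open set that avoids the PCP locus forces the PCP locus into a proper closed subset. Part~(a) is also in the right spirit, and your explicit quadratic example (whose Jacobian determinant I checked is $-2(x_0^3+x_1^3+x_2^3-5x_0x_1x_2)$, a smooth member of the Hesse pencil) correctly settles non-emptiness for $d=2$. Two small points there: first, Theorem~\ref{theorem:critgentype} asserts that $\Crit_f$ is irreducible and of general type, \emph{not} that it is smooth, so invoking it for non-emptiness when $d\ge3$ requires an extra step (the paper instead cites~\cite{MR2775814} on branch curves of generic linear projections of smooth surfaces, applied to the $d$-uple Veronese image of $\PP^2$, which gives smoothness directly); second, once $\Crit_f$ is a smooth plane curve you get irreducibility for free from B\'ezout, so you do not need a separate ``irreducibility is preserved'' argument.

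The genuine gap is in part~(b) at $d=2$, and you already flag it yourself. Your Hurwitz/rigidity argument is fine for $d\ge3$ (genus $\ge 10$, so no non-trivial surjective self-maps), but for $d=2$ the critical cubic is elliptic and the degree-sequence bookkeeping you set up does not by itself yield a contradiction: the constraint $\prod m_i = 2^k$ with $m_1\le 2$ leaves many arithmetic possibilities open (for instance the intermediate images $Y_i\ne C$ are not components of $\Crit_f$, so nothing forces their relative degrees down, and $m_i$ can be as large as the topological degree $4$). ``Rule out the remaining cases'' is the whole problem, not a step of it. The paper avoids this entirely by invoking the theorem of Bonifant and Dabija~\cite[Theorem~4.1]{MR1940161}: any irreducible curve in $\PP^2$ that is forward invariant under a non-linear morphism $\PP^2\to\PP^2$ must be rational. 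Applied to the smooth critical curve of degree $3(d-1)$ and genus $\tfrac12(3d-4)(3d-5)\ge 1$, this gives an immediate contradiction for every $d\ge 2$, including $d=2$, with no need for endomorphism rigidity of the curve. If you want to close your gap while staying close to your own strategy, you need an input of this kind --- some theorem excluding elliptic (not just general-type) invariant curves --- rather than a case analysis of degree sequences.

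\end{document}
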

\begin{proof}
(a)\enspace  
The set~$\EEE$ is clearly Zariski open, so the only
question is whether it's empty.  To prove that~$\EEE$
is not empty, we use~\cite[Theorem~1]{MR2775814}, which says that for
any smooth irreducible surface~$S\subset\PP^r$, the set of linear
projections $\pi:\PP^r\to\PP^2$ such that the critical locus
of~$\pi|_S$ is smooth and irreducible is a non-empty Zariski open
subset of the space of linear projections. (The special case that~$S$
is a Veronese embedding of~$\PP^2$ is proven in~\cite{MR1817250}.)
Taking~$S$ to be the image of the~$d$-uple
embedding~$\rho_d:\PP^2\hookrightarrow\PP^r$~\cite[Exercise~I.2.12]{hartshorne},
we see that compositions with linear projections~$\pi\circ\rho_d$
correspond exactly to degree~$d$ rational maps~$\PP^2\to\PP^2$. So the
desired result is the special case of~\cite{MR2775814} in
which~$S=\rho_d(\PP^2)$.
\par\noindent(b)\enspace
Let~$f\in\EEE$.  Then~$\Crit_f$ is a
smooth irreducible curve of degree~$3(d-1)$ in~$\PP^2$, so it has
genus $g(\Crit_f)=\frac12(3d-4)(3d-5)\ge1$ for
all~$d\ge2$.\footnote{For~$d\ge3$, the genus
  satisfies~$g(\Crit_f)\ge10$, so in particular~$\Crit_f$ is of
  general type, as predicted by Theorem~\ref{theorem:critgentype}; but
  for~$d=2$ we see that~$\Crit_f$ is not of general type. This shows
  that Theorem~\ref{theorem:critgentype} cannot be extended to~$d=2$.}
Suppose now that~$f$ is~PCP, so~$f^k(\Crit_f)=\Crit_f$ for
some~$k\ge1$.  (Note that we must have equality, since~$\Crit_f$ is
irreducible.)  Thus~$\Crit_f$ is an irreducible curve that is
(forward) invariant for the map~$f^k$.
Further, since
\[
\Crit_{f^k}=\Crit_f+f^*\Crit_f+\cdots +f^{(k-1)*}\Crit_f,
\]
we see that~$\Crit_f$ is also critical for~$f^k$. We now
apply~\cite[Theorem~4.1]{MR1940161}, which says that an irreducible
curve in~$\PP^2$ that is forward invariant for a non-linear
morphism~$\PP^2\to\PP^2$ is necessarily a rational curve, i.e., has
genus~$0$. This contradicts~$g(\Crit_f)\ge1$, which completes the
proof that the set~$\EEE$ does not contain any PCP
maps.
\par\noindent(c)\enspace
This is immediate from~(a) and~(b), since~(a) gives a non-empty
Zariski open subset of~$\End_d^2$, and~(b) says that this open set
contains no PCP maps.
\end{proof}

\section{Proof that post-critically periodic maps are sparse}
\label{section:pbpmapssparse}

In this section we prove the tail length~$0$ part of
Theorem~\ref{theorem:PCPnotdenseinRatdn}, i.e., we prove the following
result:

\begin{theorem}
\label{theorem:PCPfinite}
Let~$d\ge3$ and~$n\ge2$. Then
\[
  \{ f\in\End_d^n : \text{$f$ is post-critically periodic} \}
\]
is contained in a proper Zariski closed subset of~$\End_d^n$.
\end{theorem}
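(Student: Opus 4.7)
The plan is to use Theorem~\ref{theorem:critgentype} to produce a non-empty Zariski open subset $U\subseteq\End_d^n$ on which $\Crit_f$ is irreducible and of general type, and then to show that no $f\in U$ is PCP. The PCP locus will then lie in the proper closed complement $\End_d^n\setminus U$, which is what is required.

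Fix $f\in U$ and suppose $f$ is PCP of some period $k\ge 1$, so $f^k(\Crit_f)\subseteq\Crit_f$. Since $f\colon\PP^n\to\PP^n$ is a finite morphism, each iterated image $f^i(\Crit_f)$ is an irreducible closed subvariety of dimension $n-1$, hence an irreducible hypersurface of~$\PP^n$. In particular, the inclusion $f^k(\Crit_f)\subseteq\Crit_f$ between irreducible hypersurfaces of the same dimension must be an equality, so the induced self-morphism $g:=f^k|_{\Crit_f}\colon\Crit_f\to\Crit_f$ is surjective. The next step invokes the classical rigidity theorem that a surjective self-morphism of a projective variety of general type has degree~$1$ (which follows from $g^*K_{X}=K_{X}-R_g$ with $R_g\ge 0$, together with multiplicativity of volume under finite surjective pullback). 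Applied to $g$, and using multiplicativity of degree in the composition $g=f|_{f^{k-1}(\Crit_f)}\circ\cdots\circ f|_{\Crit_f}$, this forces each factor $f|_{f^i(\Crit_f)}\colon f^i(\Crit_f)\to f^{i+1}(\Crit_f)$ to be birational.

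The final step is a Bezout calculation. If $C\subset\PP^n$ is an irreducible hypersurface of degree~$c$ on which $f|_C$ is birational onto its image, choose a generic line $L\subset\PP^n$, expressed as the intersection of $n-1$ hyperplanes. Then $f^{-1}(L)$ is a curve of degree $d^{n-1}$, being the intersection of $n-1$ hypersurfaces of degree~$d$. By Bezout, $f^{-1}(L)\cap C$ is a $0$-cycle of degree $d^{n-1}c$. On the other hand, its image under $f$ lies in $L\cap f(C)$, a set of $\deg f(C)$ points; and since $f|_C$ is generically $1$-to-$1$, each such point has a unique preimage in~$C$. Hence $\deg f(C)=d^{n-1}c$. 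Iterating this identity along the chain $\Crit_f\to f(\Crit_f)\to\cdots\to f^k(\Crit_f)=\Crit_f$ yields
\[
(n+1)(d-1)=\deg\Crit_f=\deg f^k(\Crit_f)=d^{k(n-1)}(n+1)(d-1),
\]
which forces $d^{k(n-1)}=1$. Since $d\ge 3$, $n\ge 2$, and $k\ge 1$, this is impossible, completing the contradiction.

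The main obstacle has already been handled in Theorem~\ref{theorem:critgentype}: once one knows that general type of $\Crit_f$ holds on a non-empty open subset of~$\End_d^n$, the remainder of the argument is a clean interplay between the rigidity of general-type varieties under surjective self-maps and elementary projective intersection theory. The only minor delicacy is confirming that the degree-one rigidity statement applies to the possibly singular (but canonical, per the proof of Theorem~\ref{theorem:critgentype}) variety $\Crit_f$, which is standard.
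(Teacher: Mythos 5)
Your proof is correct, and it genuinely diverges from the paper's in its second half. Both arguments begin identically: use Theorem~\ref{theorem:critgentype} to land in the open locus where $\Crit_f$ is irreducible of general type, observe that $f^k(\Crit_f)\subseteq\Crit_f$ forces $f^k(\Crit_f)=\Crit_f$, and invoke rigidity for surjective self-morphisms of general-type varieties. From there you only extract the weaker consequence that $f^k|_{\Crit_f}$ has degree~$1$ and close the argument with intersection theory: $f^k_*[\Crit_f]=(d^k)^{n-1}\deg(\Crit_f)\cdot H$, which combined with $f^k(\Crit_f)=\Crit_f$ and degree~$1$ forces $d^{k(n-1)}=1$, a contradiction. (The factorization into the individual $f|_{f^i(\Crit_f)}$ is not even needed; the computation for $f^k$ alone suffices, as your displayed equation already shows.) The paper instead extracts the stronger consequence that $f^k|_{\Crit_f}$ is an automorphism of finite order, hence $\Crit_f\subseteq\Fix(f^m)$ for some $m$, and then proves separately (Lemma~\ref{lemma:fixdim0}, built on Lemma~\ref{lemma:bdhtset}) that $\Fix$ of any degree~$\ge2$ morphism is zero-dimensional, via Northcott's theorem plus a specialization argument to reduce from $\FF$ to $\Qbar$. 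Your route is more elementary and more purely geometric: it avoids heights, bounded-degree finiteness, and specialization entirely, replacing the dimension contradiction with a degree contradiction. The paper's route isolates Lemma~\ref{lemma:fixdim0} as a reusable tool (it reappears in the proof of Proposition~\ref{prop:taillengthell}), but your degree computation would serve equally well there. The only caveat, which you already flag, is that the degree-one rigidity statement must be applied to the singular variety $\Crit_f$; this is handled by the canonicity of its singularities established in Theorem~\ref{theorem:critgentype}, exactly as the paper implicitly relies on in its Step~2.
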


\begin{proof}
For notational convenience we let
\[
  \PCP_d^n :=   \{ f\in\End_d^n : \text{$f$ is post-critically periodic} \}.
\]
We assume that~$\PCP_d^n(\FF)$ is a Zariski dense subset  
of~$\End_d^n(\FF)$ and derive a contradiction. 

\Step
Theorem~\ref{theorem:critgentype} tells us that
\begin{equation}
  \label{eqn:EnddnCfirrgt}
  \{f\in\End_d^n(\FF) : \text{$\Crit_f$ is
         irreducible and of general type} \}
\end{equation} 
is a non-empty Zariski open subset of~$\End_d^n(\FF)$.  Under our
assumption that~$\PCP_d^n(\FF)$ is a Zariski dense subset
of~$\End_d^n(\FF)$, it follows that the intersection
of~$\PCP_d^n(\FF)$ with~\eqref{eqn:EnddnCfirrgt}, i.e., the set
\[
  \{f\in\PCP_d^n(\FF) : \text{$\Crit_f$ is
         irreducible and of general type} \},
\] 
is also a Zariski dense subset of~$\End_d^n(\FF)$.

\Step
We next show that for every map~$f$ in the set
\[
  \{f\in\PCP_d^n(\FF) : \text{$\Crit_f$ is
         irreducible and of general type} \},
\]
there is an integer~$m(f)\ge1$ such that
\[
\Crit_f \subseteq \Fix(f^{m(f)}),
\]
i.e., there is an iterate of~$f$ that fixes every point in~$\Crit_f$.
To see this, we use the definition of~PCP to find some~$k\ge1$ such
that~$f^k(\Crit_f)\subseteq\Crit_f$. But~$f$ is a morphism, so for any
irreducible subvariety~$V\subset\PP^n$ we
have~$\dim{f}(V)=\dim{V}$. Hence~$\dim f^k(\Crit_f)=\dim\Crit_f$, and
the irreducibility of~$\Crit_f$ implies
that~$f^k(\Crit_f)=\Crit_f$. In other words, the map~$f^k|_{\Crit_f}$
is a surjective endomorphism of~$\Crit_f$. But~$\Crit_f$ is of general
type, and it is known that for varieties of general type, every
surjective endomorphism is an automorphism;
see~\cite[Lemma~3.4]{MR2509692} or~\cite[Proposition~10.10]{MR637060}.
Further, the automorphism group of a variety of general type is
finite; see~\cite{MR3034294} for a recent strong upper bound on its
order.\footnote{The quintessential example is that of a curve of
  genus~$g\ge2$, whose automorphism group has order at
  most~$84(g-1)$.}  Hence there exists an~$r$ such that~$f^{kr}$ fixes
every point of~$\Crit_f$, and we take~$m(f)=kr$.

\Step
We prove two useful lemmas.

\begin{lemma}
\label{lemma:bdhtset}
Let~$V\subset\PP^n$ be an irreducible projective variety defined over~$\Qbar$
such that~$V(\Qbar)$ is a set of bounded height. Then~$\dim(V)=0$.
\end{lemma}
\begin{proof}
Let~$K/\QQ$ be a number field that is a field of definition for~$V$,
and let~$D:=\dim(V)$. Taking a projection onto a generic
dimension~$D$ linear subspace of~$\PP^n$ defined over~$K$ gives a
generically finite dominant rational map~$\pi:V\dashrightarrow\PP^D$.
We take non-empty Zariski open subsets~$V^\circ\subseteq{V}$
and~$U^\circ\subseteq{\PP^D}$ defined over~$K$ so that~$\pi^\circ:V^\circ\to{U^\circ}$
is a finite morphism, and we let~$r=\deg(\pi^\circ)$.
Then the map
\begin{equation}
  \label{eqn:picircLKrVL}
\pi^\circ : \bigcup_{[L:K]\le r} V^\circ(L) \longmapsto U^\circ(K)
\end{equation}
is surjective, since every point in~$U$ has at most~$r$ points in its
inverse image. The points in~$V^\circ(L)$ have bounded height, since
we have assumed that~$V(\Qbar)$ is a set of bounded height, and they
also clearly lie in a field of bounded degree over~$K$.  In other
words, the set on the left-hand side of~\eqref{eqn:picircLKrVL} is a
set of bounded degree and bounded height, so it is a finite set;
see~\cite[Theorem~3.7]{MR2316407}.  The surjectivity
of~\eqref{eqn:picircLKrVL} implies that~$U^\circ(K)$ is finite.
But~$U^\circ$ is a non-empty Zariski open subset of~$\PP^D$ that is
defined over~$K$, so~$U^\circ(K)$ finite implies that~$D=0$.
\end{proof}

\begin{lemma}
\label{lemma:fixdim0}
Let~$f\in\End_d^n(\FF)$ with~$d\ge2$. Then~$\dim\Fix(f)=0$.
\end{lemma}
\begin{proof}
We first note that
\[
\Fix(f) = \bigcap_{0\le i<j\le n} \{x_jf_i - x_if_j = 0\}
\]
is an intersection of~$\frac12(n^2+n)$ hypersurfaces of degree~$d+1$.
So~$\Fix(f)$ is a subvariety of~$\PP^n$, and by a weak form of
Bezout's theorem, we know that either
\[
  \dim\Fix(f)\ge1
  \quad\text{or}\quad
  \#\Fix(f) \le D(n,d),
\]
where as indicated~$D(n,d)$ depends only on~$n$ and~$d$. (In fact, we
can take~$D(n,d)=(d+1)^n$.)

Suppose first that~$f\in\End_d^n(\Qbar)$.  Then Northcott's
theorem~\cite{northcott:periodicpoints} (or
see~\cite[Theorem~3.12]{MR2316407}) says that~$\Per(f)$ is a set of
bounded height, so Lemma~\ref{lemma:bdhtset} tells us that every
component of~$\Fix(f)$ has dimension~$0$, and since~$\Fix(f)$ has only
finitely many components, this completes the proof
that~$\dim\Fix(f)=0$.
\par
We next suppose that~$f\in\End_d^n(\FF)$ and that~$\dim\Fix(f)\ge1$,
so in particular the previous paragraph says that~$f$ is not defined
over~$\Qbar$.  Since~$\End_d^n$ is of finite type, the map~$f$ is
defined over some finitely-generated extension of~$\QQ$, which we may
take, without loss of generality, to be the function field~$K(X)$ of a
positive-dimensional variety~$X$ defined over a number field
$K$. Using the assumption that~$\dim\Fix(f)\ge1$, and replacing~$K(X)$
with a finite extension if necessary, we may also assume without loss
of generality that the number of~$K(X)$-rational fixed points of~$f$
is at least~$m$, for any given~$m$.
\par
We may specialize~$f$ at any point of~$X(\Kbar)$ to get a map defined
over~$\Kbar=\Qbar$. There exists a non-empty Zariski-open
set~$U\subseteq X$ so that for points~$x\in{U}(\Kbar)$, the
specialization~$f_x$ will still be an endomorphism of degree~$d$,
i.e.,~$f_x\in\End_d^n(\Kbar)$.  Further, if we choose any finite set
of distinct points of~$\PP^n$ over~$K(X)$, they will remain distinct
after specialization on a non-empty, Zariski-open set of points
$x\in{X}(\Kbar)$.
\par
Now suppose that there are at least~$D(n,d)+1$ distinct fixed points
of~$f$ defined over~$K(X)$. By the argument above, there is a
specialization~$f_x$ defined over~$\Kbar$ which is itself a morphism
of degree~$d$, and such that the~$D(n, d)+1$ fixed points above
specialize to~$D(n,d)+1$ distinct fixed points of~$f_x$. This
contradicts what we have already shown, and so we must
have~$\dim\Fix(f)=0$.
\end{proof}

\Step
We resume the proof of Theorem~\ref{theorem:PCPfinite}.
Let~$f$ be an element of the set
\begin{equation}
  \label{eqn:fPCPdnCC}
  \{f\in\PCP_d^n(\FF) : \text{$\Crit_f$ is
         irreducible and of general type} \}.
\end{equation} 
Applying Step~2, we find an integer~$m=m(f)\ge1$ so
that
\[
\Crit_f\subseteq\Fix(f^m).
\]
The map~$f^m$ is in~$\End_{d^m}^n(\FF)$, so applying
Lemma~\ref{lemma:fixdim0} to the map~$f^m$ tells us that
$\dim\Fix(f^m)=0$. Hence
\[
n-1 = \dim \Crit_f \le \dim \Fix(f^m) = 0,
\]
contradicting our assumption that~$n\ge2$.
\end{proof}

\section{Two multiplicity lemmas}
\label{section:twomultiplicitylemmas}
In this section we prove two multiplicity lemmas that will be used to
deal with PCF maps of tail length~$1$.

\begin{definition}
\label{definition:multiplicity}
We use~$\Mult$ to denote multiplicity in various contexts.  Thus
if~$s$ is a local parameter cutting out~$\Crit_f$ near~$p$ and~$t$ is
a local parameter cutting out~$\Branch_f$ near~$f(p)$, then
\[
f^{\#}(t)=(\text{unit~in~the~local~ring~at~$p$})\cdot{s}^k
\quad\text{with}\quad\Mult_{\Crit_f}(f) = k.
\]
And if~$Z$ is a zero-dimensional scheme and~$p\in Z$,
then~$\Mult_Z(p)$ is the scheme-theoretic multiplicity of~$Z$ at~$p$.
\end{definition}

\begin{lemma}
\label{multiplicityalongcriticallocus}
Let~$X$ and~$Y$ be projective varieties of dimension~$n$,
let~$f:X\to{Y}$ be a morphism, and let~$p\in \Crit_f$ be a point
satisfying\textup:
\begin{parts}
  \Part{\textbullet}
  $p$ is a smooth point of~$\Crit_f$.
  \Part{\textbullet}
  $p$ is a smooth point of~$X$.
  \Part{\textbullet}
  $f(p)$ is a smooth point of~$Y$.
  \Part{\textbullet}
  The restriction~$f|_{\Crit_f}$ is an immersion near~$p$.
\end{parts}
Then we have\textup:
\begin{parts}
  \Part{(a)}
  The point~$p$ is an isolated point of~$f^{-1}(f(p))$.
  \Part{(b)}
  The multiplicity of~$p$ in this set equals the multiplicity of~$f$
  along its critical locus,
\[
\Mult_{f^{-1}(f(p))}(p) = \Mult_{\Crit_f}(f).
\]
\end{parts}
\end{lemma}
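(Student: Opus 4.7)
The problem is entirely local near $p$, so I would pass to the completed local rings $\hat\Ocal_{X,p}$ and $\hat\Ocal_{Y,f(p)}$ and carry out an explicit computation in carefully chosen systems of parameters. Since $p$ is a smooth point of both $X$ and $\Crit_f$, I can pick a regular system of parameters $s, u_1, \ldots, u_{n-1}$ for $\hat\Ocal_{X,p}\cong \FF[[s, u_1, \ldots, u_{n-1}]]$ with $s$ cutting out $\Crit_f$. The immersion hypothesis on $f|_{\Crit_f}$, combined with the equality $\dim \Crit_f = \dim \Branch_f = n-1$, makes $f|_{\Crit_f}$ a local isomorphism onto $\Branch_f$, so in particular $\Branch_f$ is smooth at $f(p)$. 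I then pick a regular system of parameters $t, v_1, \ldots, v_{n-1}$ for $\hat\Ocal_{Y,f(p)}$ with $t$ cutting out $\Branch_f$ and, after an invertible adjustment of the $u_i$, with $f^{\#}(v_i) \equiv u_i \pmod{s}$. Definition~\ref{definition:multiplicity} then gives
\[
f^{\#}(t) = w \cdot s^k \quad\text{and}\quad f^{\#}(v_i) = u_i + s\cdot h_i,
\]
where $w \in \hat\Ocal_{X,p}^{\times}$, $k = \Mult_{\Crit_f}(f)$, and $h_i \in \hat\Ocal_{X,p}$.

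The scheme-theoretic fiber of $f$ over $f(p)$ is cut out near $p$ by the ideal $I = \bigl(f^{\#}(t), f^{\#}(v_1), \ldots, f^{\#}(v_{n-1})\bigr)$, and by definition $\Mult_{f^{-1}(f(p))}(p)$ is the length of $\hat\Ocal_{X,p}/I$. I would compute this length in two stages. First, the formal implicit function theorem lets me solve each equation $u_i + s h_i = 0$ as $u_i = \phi_i(s) \in \FF[[s]]$, giving
\[
\hat\Ocal_{X,p} / \bigl(f^{\#}(v_1), \ldots, f^{\#}(v_{n-1})\bigr) \;\cong\; \FF[[s]].
\]
Second, reducing $f^{\#}(t) = w \cdot s^k$ modulo this ideal sends $w$ to a unit $\bar w \in \FF[[s]]^{\times}$, so $\hat\Ocal_{X,p}/I \cong \FF[[s]]/(s^k)$, which has length exactly $k$. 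The finiteness of this length shows that $p$ is isolated in $f^{-1}(f(p))$, proving (a), and the value $k = \Mult_{\Crit_f}(f)$ gives (b).

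The only delicate point is the coordinate setup in the first paragraph: arranging $f^{\#}(v_i) \equiv u_i \pmod s$ uses that $f|_{\Crit_f}$ is a local isomorphism onto $\Branch_f$, which in turn requires both the immersion hypothesis and the equality of dimensions between $\Crit_f$ and its image. Once those coordinates are in place, the multiplicity computation reduces to the standard fact that $\FF[[s]]/(\text{unit}\cdot s^k)$ has length $k$, so I do not anticipate any further obstacle.
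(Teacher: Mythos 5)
Your argument is correct and is essentially the paper's proof: both choose a parameter $s$ cutting out $\Crit_f$ and $t$ cutting out $\Branch_f$, complete them to compatible regular systems of parameters so that $f^{\#}$ sends the complementary coordinates to $u_i$ plus an element of $(s)$, and then compute the length of the local ring of the fiber to be $k=\Mult_{\Crit_f}(f)$. The only stylistic differences are that the paper absorbs the unit into $s$ (writing $f^{\#}(t)=s^k$) and suppresses the possible dependence of the correction terms on $u_1,\ldots,u_{n-1}$, while you keep the unit $w$ explicit and invoke the formal implicit function theorem to eliminate the $u_i$; both routes give $\FF[[s]]/(s^k)$ of length $k$.
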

\begin{proof}
We let
\[
k = \Mult_{\Crit_f}(f).
\]
We first note that since~$p$ is a smooth point of~$\Crit_f$ and
$f|_{\Crit_f}$ is an immersion near~$p$, it follows that~$f(p)$ is a
smooth point of~$\Branch_f$. We work in the completions of the local
rings at~$p$ and~$f(p)$, so we can pick local equations~$s$ cutting
out~$\Crit_f$ at~$p$ and~$t$ cutting out~$\Branch_f$ at~$f(p)$ such
that~$f^{\#}(t)= s^k$. We complete~$s$ and~$t$ respectively to local
coordinates~$(x_1,\ldots,x_{n-1},x_{n})=s$ for~$X$ at~$p$ and
$(y_1,\ldots,y_{n-1},y_{n})=t$ for~$Y$ at~$f(p)$ in such a way that
$(x_1,\ldots,x_{n-1})$ restrict to local coordinates for~$\Crit_f$ at
$p$, and~$(y_1,\ldots,y_{n-1})$ restrict to local parameters for
$\Branch_f$ at~$f(p)$, and further so that in these coordinates, the map induced by
$f_{\Crit_f}$ from the completion of the local ring of~$\Branch_f$ at
$f(p)$ to the completion of the local ring of~$\Crit_f$ at~$p$ is
\begin{align*}
f_{\Crit_f}^{\#}:\FF[\![y_1,\ldots,y_{n-1}]\!] &\to \FF[\![x_1,\ldots,x_{n-1}]\!], \\
y_i &\mapsto x_i, \qquad\text{$i=1,\ldots, n-1$.}
\end{align*} 
Then in these coordinates, the map induced by~$f$ from the completion
of the local ring of~$Y$ at~$f(p)$ to the completion of the local ring
of~$X$ at~$p$ is
\begin{align*}
f^{\#}:\FF[\![y_1,\ldots,y_{n-1}, y_n]\!] &\to \FF[\![x_1,\ldots,x_{n-1}, x_n]\!], \\
y_i &\mapsto
\begin{cases}
  x_i + f_i(x_n) &\text{for $i=1,\ldots, n-1$,}\\
  x_n^k &\text{for $i=n$,} \\
\end{cases}
\end{align*} 
where each~$f_i$ is a power series in~$x_n$ whose constant term is zero.

\begin{claim}
The set~$\{1,x_n,x_n^2,\ldots, x_n^{k-1}\}$ is an~$\FF$-basis for the vector sapce
\[
\frac{\FF[\![x_1,\dots, x_n]\!]}{\bigl(x_1 + f_1(x_n), \ldots, x_{n-1} + f_{n-1}(x_n), x_n^k\bigr)}.
\]
\end{claim}
\begin{proof}[Proof of Claim]
Both spanning and linear independence can easily be shown directly. 
\end{proof}

We conclude that 
\begin{multline*}
\frac{\FF[\![x_1,\dots, x_n]\!]}{\bigl(\text{pullback of maximal ideal of $f(p)$}\bigr)} \\
=\frac{\FF[\![x_1,\dots, x_n]\!]}{\bigl(x_1 + f_1(x_n), \ldots, x_{n-1} + f_{n-1}(x_n), x_n^k\bigr)}
\end{multline*}
has dimension~$k$ over~$\FF$, so~$p$ is an isolated point of multiplicity~$k$ in~$f^{-1}(f(p))$. 
\end{proof}

\begin{lemma}
\label{lem:multiplicityofcriticalpoint}
Let~$X$ and~$Y$ be projective varieties of dimension~$n$, let~$f:X\to Y$ be a morphism,
and let~$p\in{X}$ and~$q\in{Y}$ be
 smooth points such that~$p$ is an isolated point of
multiplicity~$k$ in~$f^{-1}(q)$. Let~$(x_1,\ldots{x}_n)$ be coordinates
at~$p$, so the completion of the local ring to~$X$ at~$p$ is
$\FF[\![x_1,\ldots, x_n]\!]$, and let~$(z_1,\ldots{z}_n)$ be coordinates at
$q$, so the completion of local ring to~$Y$ at~$q$ is
$\FF[\![z_1,\ldots, z_n]\!]$, and suppose that in these coordinates we
have~$z_i=f_i(x_1,\ldots, x_n)$. Denote the maximal ideals of the
completions of the local rings at~$p$ and~$q$ by~$\gm$ and~$\gn$ 
respectively.
\begin{parts}
  \Part{(1)}
  The following are equivalent\textup:
  \begin{parts}
    \Part{(A)} $k=1$.
    \Part{(B)} $f_1,\ldots, f_n$ generate $\gm$.
    \Part{(C)} $\{f_1,\ldots,f_n\} \bmod \gm^2$ is an $\FF$-basis for $\gm/\gm^2$.
    \Part{(D)} $p \not\in \Crit_f$.
  \end{parts}
  \Part{(2)}
  If~$k=2$, then the following are true\textup:
  \begin{parts}
    \Part{(a)}
    $p$ is a smooth point of $\Crit_f$.
    \Part{(b)}
    $f|_{\Crit_f}$ is an immersion near $p$.
    \Part{(c)}
    $f$ has multiplicity~$2$ along~$\Crit_f$ near~$p$.
  \end{parts}
\end{parts}
\end{lemma}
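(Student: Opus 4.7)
For Part (1), the plan is to unify everything through the single dimension formula
\[
k \;=\; \dim_\FF \FF[\![x_1,\ldots,x_n]\!]/(f_1,\ldots,f_n),
\]
valid because $p$ is isolated in $f^{-1}(q)$. Under this identity, (A) $\Leftrightarrow$ (B) is the observation that the quotient has dimension one iff $(f_1,\ldots,f_n)=\gm$; (B) $\Leftrightarrow$ (C) is Nakayama's lemma combined with $\dim_\FF(\gm/\gm^2)=n$, which forces $n$ generators of $\gm$ to already be an $\FF$-basis modulo $\gm^2$; and (C) $\Leftrightarrow$ (D) amounts to identifying the coordinates of $\{f_i \bmod \gm^2\}$ in the basis $\{x_j \bmod \gm^2\}$ with the rows of the Jacobian $(\partial f_i/\partial x_j)(p)$, which is invertible precisely when $p$ lies outside the vanishing locus of its determinant, i.e., $p\notin\Crit_f$.

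The heart of the work is Part (2), and my plan is first to put $f$ into a local normal form. Let $r$ denote the rank of the Jacobian of $f$ at $p$. After a linear change of coordinates on both source and target, we can arrange $f_i = x_i + (\text{order}\ge 2)$ for $i\le r$ and $f_i\in\gm^2$ for $i>r$. Setting $u_i:=f_i$ for $i\le r$, formal implicit function theorem tells us that $(u_1,\ldots,u_r,x_{r+1},\ldots,x_n)$ remains a regular system of parameters, so modulo $(u_1,\ldots,u_r)$ each $f_j$ for $j>r$ descends to a power series $\bar f_j\in\FF[\![x_{r+1},\ldots,x_n]\!]$ of order $\ge 2$. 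Hence
\[
k \;=\; \dim_\FF \FF[\![x_{r+1},\ldots,x_n]\!]/(\bar f_{r+1},\ldots,\bar f_n).
\]
If $n-r\ge 2$, this quotient surjects onto $\FF[\![x_{r+1},\ldots,x_n]\!]/(x_{r+1},\ldots,x_n)^2$, of dimension $1+(n-r)\ge 3$, contradicting $k=2$. Combined with $r\le n-1$ (else $k=1$ by Part~(1)), this forces $r=n-1$, and since the quotient reduces to $\FF[\![x_n]\!]/(\bar f_n)$, the remaining series $\bar f_n$ must have order exactly $2$, say $\bar f_n=ax_n^2+\cdots$ with $a\neq 0$.

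Once this normal form is in hand, (a), (b), (c) should be bookkeeping in the coordinates $(u_1,\ldots,u_{n-1},x_n)$. The first $n-1$ rows of the Jacobian are standard basis vectors, so its determinant equals $\pm\,\partial f_n/\partial x_n$. By the previous step, $f_n = ax_n^2 + (\text{terms lying in }(u_1,\ldots,u_{n-1})\text{ or of order }\ge 3\text{ in }x_n)$ with $a\neq 0$, so $\partial f_n/\partial x_n$ has $x_n$-linear coefficient $2a$, which is nonzero because $\operatorname{char}\FF=0$. Thus $\partial f_n/\partial x_n$ is itself a local parameter at $p$, cutting $\Crit_f$ out as a smooth divisor with local coordinates $(u_1,\ldots,u_{n-1})$, giving (a); and the restriction $f|_{\Crit_f}$ has first $n-1$ component functions equal to $u_1,\ldots,u_{n-1}$, hence is an immersion near $p$, giving (b). Finally, (a) and (b) together with smoothness of $p$ and $q$ let me invoke Lemma~\ref{multiplicityalongcriticallocus} to conclude
\[
\Mult_{\Crit_f}(f) \;=\; \Mult_{f^{-1}(f(p))}(p) \;=\; k \;=\; 2,
\]
which is (c). I expect the rank computation $r=n-1$ to be the only substantive obstacle; it uses the isolatedness and the specific value $k=2$ in an essential way, and the downstream arguments are straightforward Taylor expansions in the resulting normal form.
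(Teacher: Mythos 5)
Your proof is correct. Part~(1) is the same as the paper's argument: the length identity $k=\dim_\FF\FF[\![x_1,\ldots,x_n]\!]/(f_1,\ldots,f_n)$, Nakayama's lemma, and the identification of the Jacobian rank with the rank of $\{f_i\bmod\gm^2\}$. For Part~(2) your normal form $(u_1,\ldots,u_{n-1},x_n)$, your Taylor-coefficient analysis of $f_n$ and $\partial f_n/\partial x_n$ (with the characteristic-zero fact that $2a\ne0$), and your final appeal to Lemma~\ref{multiplicityalongcriticallocus} all coincide with the paper's proof; the only organizational difference is how you establish that the Jacobian at $p$ has corank exactly~$1$. You split off the coordinates $u_i:=f_i$ for $i\le r$ by the formal implicit function theorem, observe $(\bar f_{r+1},\ldots,\bar f_n)\subseteq(x_{r+1},\ldots,x_n)^2$, and deduce
\[
  k \;=\; \dim_\FF\frac{\FF[\![x_{r+1},\ldots,x_n]\!]}{(\bar f_{r+1},\ldots,\bar f_n)}
  \;\ge\;
  \dim_\FF\frac{\FF[\![x_{r+1},\ldots,x_n]\!]}{(x_{r+1},\ldots,x_n)^2}
  \;=\; 1+(n-r),
\]
whereas the paper lifts a basis $g_1,\ldots,g_s$ of the cokernel of $\{f_i\bmod\gm^2\}$ in $\gm/\gm^2$ and asserts that $1,g_1,\ldots,g_s$ remain linearly independent in the length-$k$ Artinian quotient, giving $k\ge 1+s$. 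Since $s=n-r$, the bounds are identical; your version has the minor advantage of deducing the inequality from an explicit ideal containment where the paper states the linear-independence claim without proof. The downstream bookkeeping is otherwise the same.
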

\begin{proof}
Recall that
\[
k=\dim_{\FF} \frac{\FF[\![x_1,\dots, x_n]\!]}{(f_1,\ldots, f_n)}.
\]
This implies the equivalence of (A) and (B). Nakayama's lemma implies
the equivalence of (B) and (C). By definition,~$p\in \Crit_f$ if and
only if the Jacobian of~$f$, i.e., the induced map on tangent spaces,
drops rank at~$p$. The Jacobian at~$p$ is dual to the induced map from
$\gn/\gn^2$ to~$\gm/\gm^2$. In turn, the map from~$\gn/\gn^2$ to
$\gm/\gm^2$ sends the basis~$\{z_1\ldots,z_n\}$ to
$\{f_1,\ldots,f_n\}\bmod\gm^2$. Thus the Jacobian at~$p$ is full rank
if and only if~$\{f_1,\ldots,f_n\}\bmod\gm^2$ is an~$\FF$-basis for
$\gm/\gm^2$, proving the equivalence of (C) and (D). This completes
the proof of Part~(1) of Lemma~\ref{lem:multiplicityofcriticalpoint}.

For Part~(2) we suppose that~$k=2$. By the preceding discussion, the
set~$\{f_1,\ldots,f_n\}\bmod\gm^2$ does not generate~$\gm/\gm^2$. Let
$g_1,\ldots,g_s$ be functions whose reductions modulo~$\gm^2$ form a
basis for
\[
\frac{\gm/\gm^2}{\Span\bigl(\{f_1,\ldots,f_n\} \bmod \gm^2\bigr)}.
\]
Note that we have that~$s\ge 1$. Also
\[
  \text{ $1,g_1,\ldots, g_s$ are linearly independent in 
  $\dfrac{\FF[\![x_1,\dots, x_n]\!]}{(f_1,\ldots,f_n)}$. }
\]
But
\[
\dim_\FF \frac{\FF[\![x_1,\dots, x_n]\!]}{(f_1,\ldots,f_n)} = 2,
\]
which implies that~$s=1$, and hence that~$1,g_1$ form a basis. We
conclude that~$\{f_1,\ldots,f_n\}\bmod\gm^2$ span an
$(n-1)$-dimensional subspace, so without loss of generality we may
assume that~$\{f_1,\ldots,f_{n-1}\}\bmod\gm^2$ are linearly
independent, and that~$\{f_1,\ldots,f_{n-1},g_1\}\bmod\gm^2$ is a
basis for~$\gm/\gm^2$. By Nakayama's lemma again,
\[
\{y_1,\ldots,y_n\} := \{f_1,\ldots,f_{n-1},g_1\}
\]
generate~$\gm$ and form an alternate system of coordinates at
$p$. With respect to these new coordinates,~$f_n$ is a power series
$f'_n$ in~$y_1,\ldots, y_n$. We expand~$f'_n$ with respect to the last
coordinate~$y_n$,
\[
f'_n(y_1,\ldots, y_n)=c_0 + c_1 y_n + c_2 y_n^2 + \cdots,
\]
where each~$c_i$ is a power series in~$y_1,\ldots, y_{n-1}$. Also 
\[
\frac{\partial f'_n}{\partial y_n}(y_1,\ldots, y_n)=c_1 + 2 c_2 y_n + 3 c_3 y_n^2 +\cdots\,.
\]
We know that~$1,y_n$ forms a basis for
\[
\frac{\FF[\![y_1,\ldots,y_n]\!]}{(y_1,\ldots, y_{n-1}, f'_n)}\cong\frac{\FF[\![y_n]\!]}{c_0(0,\ldots,0) + c_1(0,\ldots,0) y_n + c_2(0,\ldots,0) y_n^2 + \cdots},
\]
so we must have
\[
c_0(0,\ldots,0)=c_1(0,\ldots,0)=0
\quad\text{and}\quad
c_{2,0}:=c_2(0,\ldots,0)\ne 0.
\]
Let 
\[
c_1=c_{1,1} y_1 + \cdots +c_{1,n-1} y_{n-1} + (\text{higher order terms in $\gm^2$}),
\]
where each~$c_{1,i} \in \FF$. Then 
\[
\frac{\partial f'_n}{\partial y_n}= c_{1,1} y_1 + \cdots c_{1,n-1} y_{n-1} + 2 c_{2,0} y_n + (\text{something in $\gm^2$}).
\]

We want to re-write~$f$ in coordinates~$y_1,\ldots y_n$ at~$p$ and
$z_1, \ldots z_n$ at~$q$. We have the induced map on the completions
of local rings,
\begin{align*}
  f^{\#}:\FF[\![z_1,\ldots,z_n]\!] &\to \FF[\![y_1,\ldots,y_n]\!] \\
  z_i &\mapsto
  \begin{cases}
    y_i&\text{for $i=1,\ldots, n-1$,} \\
    f_n'&\text{for $i=n$.}
  \end{cases}
\end{align*} 

In these coordinates, the Jacobian matrix~$J_f$ is of the form 
\begin{align*}
J_f=
\begin{bmatrix}
1&0&0&\cdots&0& \frac{\partial f'_n}{\partial y_1}\\
0&1&0&\cdots&0& \frac{\partial f'_n}{\partial y_2}\\
0&0&1&\cdots&0& \frac{\partial f'_n}{\partial y_3}\\
\vdots &\vdots &\vdots &\ddots&\vdots &\vdots \\
0&0&0&\cdots&1& \frac{\partial f'_n}{\partial y_{n-1}}\\
0&0&0&\cdots&0&\frac{\partial f'_n}{\partial y_n} \\
\end{bmatrix}.
\end{align*}
The critical locus~$\Crit_f$ is locally cut out by the determinant of
$J_f$, which in these coordinates is
\[
\det(J_f)=\frac{d f'_n}{d y_n}= c_{1,1} y_1 + \cdots c_{1,n-1} y_{n-1} + 2 c_{2,0} y_n + (\text{something in $\gm^2$}).
\]
Since~$c_{2,0}$ is non-zero in~$\FF$, we see that~$\det(J_f)$ is non-zero in
$\gm/\gm^2$, which implies that~$\Crit_f$ is smooth at~$p$.

The tangent space to~$\Crit_f$ at~$p$ is cut out by the equation
\[
y_n= -\frac{c_{1,1}}{2 c_{2,0}} y_1 - \cdots - \frac{c_{1,n-1}}{2 c_{2,0}} y_{n-1},
\]
so~$y_1,\ldots, y_{n-1}$ restrict to give local coordinates (a basis)
for the cotangent space to~$\Crit_f$ at~$p$. The map
$f|_{\Crit_f}:\Crit_f\to Y$ induces the following map of completions of local
rings at~$p$ and~$q$:
\begin{align*}
f_{\Crit_f}^{\#}:\FF[\![z_1,\ldots,z_n]\!] &\to \frac{\FF[\![y_1,\ldots,y_{n}]\!]}{(\det(J_f))}\isom\FF[\![y_1,\ldots,y_{n-1}]\!] \\
z_i &\mapsto
\begin{cases}
  y_i&\text{for $i=1,\ldots,n-1$,} \\
  f_n' \bmod \det(J_f)&\text{for $i=n$.}\\
\end{cases}
\end{align*} 
In these coordinates, it is clear that the map on cotangent spaces is
surjective, so the map on tangent spaces is injective. Thus the map
$f|_{\Crit_f}:\Crit_f\to Y$ is an immersion near~$p$, as
desired. Finally, a direct application of Lemma
\ref{multiplicityalongcriticallocus} tells us that~$f$ has multiplicity~$2$
along~$\Crit_f$ near~$p$.
\end{proof}

\section{A map with a minimally branched point}
\label{section:hyperplaneconstruction}

In this section we construct a map~$f$ whose branch locus contains a
point that is minimally branched. We call this the ``hyperplance
construction'' because the coordinates of the map~$f$ that we
construct vanish along hyperplanes.

\begin{proposition}[Hyperplane Construction]
\label{proposition:hyperplane}
Let~$n\ge1$ and~$d\ge2$.
\begin{parts}
\Part{(a)}
There exists a morphism~$f:\PP^n\to\PP^n$ of degree~$d$ containing
a branch point $q\in \Branch_f$ with the property that
\begin{equation}
  \label{eqn:fq2ppluspi}
  f^{*}(q)= 2 p + \underbrace{p_1 + p_2 + \cdots + p_{d^n-2}}_{\text{\hidewidth distinct points different from $p$\hidewidth}}.
\end{equation}
\Part{(b)}
Let~$f$ be a map as in~\textup{(a)} with a point~$q$ satisfying~\eqref{eqn:fq2ppluspi}. Then the following are true\textup:
\begin{parts}
  \Part{(1)}
  The point~$q$ is a smooth point of~$\Branch_f$, and thus lies on exactly one irreducible component~$B$ of~$\Branch_f$.
  \Part{(2)}
  There exists a unique irreducible component~$C$ of~$\Crit_f$ mapping to~$B$.
  \Part{(3)}
  The map~$f|_{C}:C \to B$ is generically $1$-to-$1$.
  \Part{(4)}
  The map~$f$ has multiplicity~$2$ along~$C$.
\end{parts}
\end{parts}
\end{proposition}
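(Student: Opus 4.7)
For part (a), the plan is to give an explicit construction, as suggested by the name ``hyperplane construction.'' The idea is to choose the $n+1$ homogeneous coordinate polynomials of $f$ to be products of linear forms so that (i)~they have no common zero in $\PP^n$, making $f$ a morphism, and (ii)~for some specific $q$ the common zero scheme of $f_0 - q_0 f_n, \ldots, f_{n-1} - q_{n-1} f_n$, which is $f^{-1}(q)$, is supported on $d^n - 1$ distinct points with exactly one of multiplicity~$2$. The starting model is a map like $[x_0^d, \ldots, x_n^d]$ or a product of one-dimensional polynomials, and one deforms along a single degree of freedom to coalesce exactly one pair of preimages over a chosen $q$ while keeping the remaining preimages simple; the verification is a direct algebraic computation.

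Part (b) then follows from the multiplicity lemmas of Section~\ref{section:twomultiplicitylemmas}. Since $p$ is an isolated point of multiplicity exactly~$2$ in $f^{-1}(q)$ and both $p$ and $q$ are smooth points of $\PP^n$, Lemma~\ref{lem:multiplicityofcriticalpoint}(2) yields simultaneously that $p$ is a smooth point of $\Crit_f$, that $f|_{\Crit_f}$ is an immersion near $p$, and that $f$ has multiplicity~$2$ along $\Crit_f$ at $p$. The immersion property implies that $\Branch_f$ is smooth at $q$, which establishes claim~(1) and identifies the unique irreducible component $B \ni q$ of $\Branch_f$.

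For claim~(2), let $C$ be the component of $\Crit_f$ through $p$, which is unique because $\Crit_f$ is smooth at $p$; then $f(C) = B$. If some other component $C' \neq C$ also satisfied $f(C') = B$, then $f|_{C'}:C' \to B$ would be dominant between irreducible varieties of the same dimension, so for a sequence of general points $q_m \in B$ converging to $q$ there would exist $p'_m \in C'$ with $f(p'_m) = q_m$, accumulating at a limit $p' \in C' \cap f^{-1}(q)$. But $p' \in \Crit_f$, and by Lemma~\ref{lem:multiplicityofcriticalpoint}(1) the simple preimages $p_1, \ldots, p_{d^n-2}$ do not lie in $\Crit_f$, forcing $p' = p$ and hence $C' = C$, a contradiction. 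Claim~(3) then follows because $f|_C$ is an immersion at $p$ with $p$ the unique preimage of $q$ lying in $C$, so the dominant morphism $f|_C:C \to B$ between irreducible varieties of equal dimension must have degree~$1$. Claim~(4) is immediate, as the multiplicity of $f$ along $C$ is generically constant and Lemma~\ref{lem:multiplicityofcriticalpoint}(2c) computes it to be~$2$ at $p$.

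The main obstacle is part~(a): producing an honest degree~$d$ morphism (no base points) whose fiber over some explicit point has \emph{exactly one} double preimage, rather than the many coincident ramified preimages ($d^{n-1}$ of them) that arise from the natural symmetric starting points like products of one-dimensional PCF polynomials or the $d$-th power map. Breaking this symmetry in a controlled way, without introducing new ramification elsewhere or creating base points, is the content of the hyperplane construction.
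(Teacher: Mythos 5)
Your treatment of part~(b) is essentially the paper's. You invoke Lemma~\ref{lem:multiplicityofcriticalpoint}(2) exactly as the paper does to obtain that $p$ is a smooth point of $\Crit_f$, that $f|_{\Crit_f}$ is an immersion near $p$, and that $f$ has multiplicity~$2$ along $\Crit_f$; from there claims~(1)--(4) follow as you say. One small caveat: your argument for~(2) is phrased via a sequence $q_m\to q$ and an accumulation point $p'$, which implicitly uses a Euclidean topology. Since $\FF$ is only assumed algebraically closed of characteristic~$0$, replace the limit by the algebro-geometric version: $f|_{C'}:C'\to B$ is a finite surjection, so $C'\cap f^{-1}(q)\ne\emptyset$; any such point lies in $\Crit_f$ and hence, by Lemma~\ref{lem:multiplicityofcriticalpoint}(1), cannot be one of the simple preimages $p_1,\ldots,p_{d^n-2}$, so it equals $p$, and smoothness of $\Crit_f$ at $p$ forces $C'=C$. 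This is the same argument the paper runs, just stated without sequences.

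The gap is in part~(a), which you have sketched rather than proved, and which you yourself flag as ``the main obstacle.'' You describe a map built from products of linear forms and a one-parameter perturbation that coalesces exactly one pair of preimages, and assert that the verification is ``a direct algebraic computation,'' but you do not give the construction or explain why the remaining $d^n-2$ preimages stay distinct, why no base points appear, and why no other coincidences are created. The paper does \emph{not} break symmetry from a symmetric starting map; it builds the linear forms from scratch so that there is exactly one forced collision and no accidental ones. Concretely: take $q=[0:\cdots:0:1]$, set $f_{n+1}=X_{n+1}^d$ and dehomogenize $X_{n+1}=1$; for $1\le i\le n$ write $f_i=\prod_{j=1}^d L_{i,j}$ with $L_{i,1}=X_i$; then the fiber $f^{-1}(q)$ over $q=(0,\ldots,0)$ is parameterized by choices $\sigma\in[n:d]$ of one factor $L_{i,\sigma(i)}$ from each $f_i$, and the solution to $L_{1,\sigma(1)}=\cdots=L_{n,\sigma(n)}=0$ is the point $P_\sigma$. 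Setting $L_{1,2}=X_1-X_2$ forces $P_{\sigma_1}=P_{\sigma_2}=q$ for the two index maps $\sigma_1\equiv1$ and $\sigma_2$ with $\sigma_2(1)=2$, $\sigma_2(i)=1$ for $i\ge2$. The crux, which your sketch omits, is the inductive choice of the remaining $L_{t,k_t+1}$: at each stage one picks a linear form not vanishing at any previously constructed $P_\sigma$, which is possible because $\FF$ is infinite and the dual space $\check{\PP}^n$ is not covered by the finitely many hyperplanes dual to those points. This guarantees that each new $P_\sigma$ exists, is distinct from all earlier ones, and hence that $f^*(q)=2q+p_1+\cdots+p_{d^n-2}$ with the $p_i$ distinct and simple. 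Without this inductive avoidance step there is no control over base points or unwanted coincidences, so as written your part~(a) does not constitute a proof.
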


\begin{proof}
(a)\enspace We take
\[
q=[0:0:\cdots:0:1]\in\PP^n,
\]
and we use~$\bfX=[X_1:\cdots:X_{n+1}]$ as homogeneous coordinates
on~$\PP^n$. We are going to create a map
\[
f(\bfX)=[f_1:\cdots:f_{n+1}]\quad\text{with}\quad f_i(\bfX)=\prod_{j=1}^d L_{i,j}(\bfX),
\]
where the~$L_{i,j}(\bfX)$ are linear forms that will be constructed inductively.
\par
We note that
\[
f(P)=q\quad\Longleftrightarrow\quad
\left(\begin{tabular}{@{}c@{}}
  for all $1\le i\le n$ there is some index \\ $1\le\s(i)\le d$ such that $L_{i,\s(i)}(P)=0$.\\
  \end{tabular}\right)
\]
In other words, the solutions to~$f(P)=q$ are parameterized
by the~$d^n$ functions
\[
\s:\{1,2,\ldots,n\} \longrightarrow \{1,2,\ldots,d\},
\]
where a given~$\s$ corresponds to the solution(s)~$P_\s$ to the system
of linear equations
\begin{equation}
\label{eqn:L1siLsns0}
L_{1,\s(1)}(P)=L_{2,\s(2)}(P)=\cdots=L_{n,\s(n)}(P)=0.
\end{equation}
To ease notation, we denote this set of index maps by
\[
\SetMap(n,d) :=
\bigl( \text{collection of maps $\s:\{1,\ldots,n\}\to\{1,\ldots,d\}$} \bigr).
\]
\par
We start our construction by setting
\[
\text{$L_{n+1,j}(\bfX)=X_{n+1}$ for all $1\le j\le d$,}
\]
i.e., we take
\[
f_{n+1}(\bfX):=X_{n+1}^d.
\]
This allows us to dehomogenize~$X_{n+1}=1$, and then by abuse of
notation, we write~$f=(f_1,\ldots,f_n)$ for the affine
map~$f:\AA^n\to\AA^n$ having affine coordinates~$(X_1,\ldots,X_n)$,
and~$q=(0,0,\ldots,0)$.
\par
We next assign the initial linear form in each~$f_i$ to be~$X_i$, i.e., 
\[
L_{1,1}=X_1,\; L_{2,1}=X_2,\ldots, L_{n,1}=X_n,
\;\text{and thus}\;
f_i=X_iL_{i,2}L_{i,3}\cdots L_{i,d}.
\]
\par
The next step is to select the second linear form in~$f_1$, which we do by
setting
\[
L_{1,2}=X_1-X_2. \quad\text{Thus}\quad
f_1 = X_1(X_1-X_2)L_{1,3}\cdots L_{1,d}.
\]
\par
This allows us to determine the solution~$P_\s$
to~\eqref{eqn:L1siLsns0} for the following two particular index
maps~$\s_1$ and~$\s_2$ in~$\SetMap(n,d)$:
\begin{align*}
  \s_1\in\SetMap(n,d)&\text{ is defined by $\s_1(i)=1$ for all $1\le i\le n$.} \\
  \s_2\in\SetMap(n,d)&\text{ is defined by $\s_2(i)=\begin{cases} 2&\text{if $i=1$,}\\ 1&\text{for $2\le i\le n$.}\\ \end{cases}$}
\end{align*}
For these index maps we have
\begin{align*}
  P_{\s_1} &= \{X_1=X_2=X_3=\cdots=X_n=0\} = q, \\
  P_{\s_2} &= \{X_1-X_2=X_2=X_3=\cdots=X_n=0\} = q.
\end{align*}  
\par

Now suppose that for a given $k_1,\ldots,k_n\in\{1,\ldots,d\}$, we
have constructed linear forms
\begin{align*}
  L_{1,1},&\ldots,L_{1,k_1},\\
  L_{2,1},&\ldots,L_{2,k_2},\\
  &\vdots \\
  L_{n,1},&\ldots,L_{n,k_n},  
\end{align*}
such that for every
\begin{equation}
\label{eqn::sinSetMapndleki}
\s\in\SetMap(n,d)\quad\text{satisfying $\s(i)\le k_i$ for all $1\le i\le n$,}
\end{equation}
the following hold:
\begin{parts}
  \Part{\textbullet}
  There is a solution $P_\s$ to~\eqref{eqn:L1siLsns0}.
  \Part{\textbullet}
  The solutions $P_\s$ corresponding to the~$\s$
  satisfying~\eqref{eqn::sinSetMapndleki} are distinct except for the
  duplicate value $P_{\s_1}=P_{\s_2}=q$ noted earlier.
\end{parts}
\par
Suppose that
\[
\text{$k_t<d$ for some $1\le{t}\le{n}$.}
\]
Then we choose a linear form $L_{t,k_t+1}$ such that
\[
\text{$L_{t,k_t+1}(P_\s)\ne0$ for all $\s$ satisfying \eqref{eqn::sinSetMapndleki},}
\]
i.e., we want~$L_{t,k_t+1}$ to not vanish at all of the previously
selected points.  We can find such a linear form by choosing a point
in the dual space~$\check{\PP}^n$ that is not on any of the
hyperplanes defined by the previously selected~$P_\s$. (This is where
we use the assumption that our field~$\FF$ is infinite, since it
ensures that~$(\check{\PP}^n)(\FF)$ is not covered by finitely many
hyperplanes.)
\par
Note that it also follows that for all~$\s$
satisfying~\eqref{eqn::sinSetMapndleki}, the
hyperplane~$L_{t,k_t+1}=0$ does not contain the line
\[
\bigcap_{i\ne t} L_{i,\s(i)},
\]
since if it did, then the form~$L_{t,k_t+1}$ would vanish at all
points on this line, including~$P_\s$. Hence for every~$\s$ satisfying
\[
\text{$\s(i)\le k_i$ for $i\ne t$  \quad and\quad $\s(t)=k_t+1$,}
\]
the hyperplanes
\[
L_{1,\s(1)},\, L_{2,\s(2)},\ldots, L_{n,\s(n)}
\]
intersect properly at a point~$P_\s$ that cannot equal any of the
previously constructed points.
\par
Continuing this process, we end up with linear forms
\[
\text{$L_{i,j}$ for all~$1\le{i}\le{n}$ and all~$1\le{j}\le{d}$}
\]
such that for $\s,\t\in\SetMap(n,d)$, we have
\[
P_{\s}=P_{\t} \;\Longleftrightarrow\; \s=\t~\text{or}~\{\s,\t\}=\{\s_1,\s_2\},
\]
where~$\s_1$ and~$\s_2$ are the maps defined earlier.
It follows that the map
\[
f(\bfX) := \biggl[
  \prod_{j=1}^d L_{1,j}(\bfX) : \cdots : \prod_{j=1}^d L_{n,j}(\bfX) : X_{n+1}^d
  \biggr],
\]
satisfies
\[
f^{*}(q)= 2 q + p_1 + p_2 + \cdots + p_{d^n-2},
\]
where the points~$q,p_1,\ldots,p_{d^n-2}$ are distinct.  This
completes the proof of Proposition~\ref{proposition:hyperplane}(a).
\par\noindent(b)\enspace  
Lemma~\ref{lem:multiplicityofcriticalpoint} tells us that:
\begin{enumerate}
\item[\textbullet]
  $p$ is the only point on $\Crit_f$ that maps to $q$.
\item[\textbullet]
  $p$ is a smooth point of $\Crit_f$.
\item[\textbullet]
  The map $f|_{\Crit_f}:\Crit_f \to \PP^n$ is an immersion near~$p$.
\end{enumerate}
This implies that~$\Branch_f$ is smooth at~$q$, so~$q$ lies on a
unique irreducible component of~$\Branch_f$, as desired. We know
already that~$q\in B$ has exactly one pre-image point in~$\Crit_f$,
and that that pre-image point~$p$ is a smooth point of~$\Crit_f$,
which implies that the unique irreducible component~$C$ of~$\Crit_f$
containing~$p$ is the only irreducible component of~$\Crit_f$ mapping
to~$B$. Since~$f|_{C}:C \to \PP^n$ is an immersion near~$p$, it is
generically $1$-to-$1$. Finally,
Lemma~\ref{lem:multiplicityofcriticalpoint} also tells us that~$f$ has
order~$2$ along~$C$, which completes the proof of
Proposition~\ref{proposition:hyperplane}(b).
\end{proof}

\begin{remark}
With minor modifications, the proof of
Proposition~\ref{proposition:hyperplane}(a) can be modified to
construct a map satisfying~$f^{*}(q)=ep+p_1+p_2+\cdots+p_{d^n-e}$ for
any~$e\ge2$. To do this, in the proof we simply start by
choosing~$L_{1,2},\ldots,L_{1,e}$ to be linear forms defining
hyperplanes in general position.
\end{remark}

\section{PCF maps with fixed tail length}
\label{section:fixedtaillength}
In this section, we prove a number of results about PCF maps with
fixed tail length~$\ell$. An immediate consequence will be a proof
that PCF maps with tail length~$1$ are sparse, and the methods that we
develop will then be used in Section~\ref{section:taillength2} to show that PCF maps
with tail length at most~$2$ are sparse.

We recall that in Section~\ref{section:pbpmapssparse} we proved that a
map~$f$ whose critical locus is irreducible and of general type cannot
be PCP. The key to the proof is the fact that these assumptions imply
that some iterate~$f^m$ is an endomorphism of~$\Crit_f$, and hence is
an automorphism of finite order, since varieties of general type have
finite automorphism groups.

More generally, suppose that~$f$ is PCF of type~$(k,\ell)$. Then
$f^{k}$ restricts to an endomorphism of~$f^{\ell}(\Crit_f)$, but if
$f^{\ell}(\Crit_f)$ is not general type, then it may admit
endomorphisms that are not of finite order. On the other hand, by
Theorem~\ref{theorem:critgentype}, we know that for most maps~$f$,
the critical locus~$\Crit_f$ is of general type.  Our next proposition
lays out a roadmap for proving that PCF maps with fixed tail
length~$\ell$ are sparse.  It says, roughly, that such maps are sparse
provided that we can find even a single map~$f$ with the property
that~$f^{\ell}(\Crit_f)$ is of general type. Using this proposition,
we will easily be able to handle the case~$\ell=1$, and with
significantly more work as described in
Section~\ref{section:taillength2}, the case~$\ell=2$.

\begin{proposition}
\label{prop:taillengthell}
Let~$n\ge2$ and~$d\ge 3$ and~$\ell\ge1$. Suppose that there exists at
least one endomoprhism~$f_0\in\End_d^n$ such
that~$f_0^{\ell}(\Crit_{f_0})$ has an irreducible component~$B$ with
the following properties\textup:
\begin{parts}
  \Part{(1)}
  There is exactly one irreducible component~$C$ of~$\Crit_{f_0}$ satisfying
  \[
  f_0^{\ell}(C)=B.
  \]
   \Part{(2)}
  None of the images~$f_0(C),\ldots,f_0^{\ell-1}(C)$ is contained in~$\Crit_{f_0}$.
  \Part{(3)}
  The map~$f_0^{\ell}|_{C}:C \to B$ is generically $1$-to-$1$.
  \Part{(4)}
  The map~$f_0^{\ell}$ has multiplicity~$2$ along~$C$.
\end{parts}
Then the following are true\textup:
\begin{parts}
\Part{(a)}
There is a non-empty Zariski open subset~$U_{d,\ell}^n\subset \End_d^n$ such that for all~$f\in U_{d,\ell}^n$\textup:
\begin{parts}
  \Part{\textbullet}
 $\Crit_f$ is irreducible and of general type.
  \Part{\textbullet}
  The map~$f^{\ell}|_{\Crit_f}:\Crit_f \to f^{\ell}(\Crit_f)$ is generically $1$-to-$1$.
   \Part{\textbullet}
  The map~$f$ is \textbf{not} PCF with tail-length~$\ell$.
\end{parts}
\Part{(b)}
The set of PCF maps with exact tail length~$\ell$ is not Zariski dense in~$\End_d^n$.
\end{parts}
\end{proposition}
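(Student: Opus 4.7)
The plan is to define $U_{d,\ell}^n$ as the intersection of four Zariski open conditions on $\End_d^n$, verify nonemptiness via the given $f_0$ and Theorem~\ref{theorem:critgentype}, and then rule out PCF maps of tail length $\ell$ by adapting the general-type argument from the proof of Theorem~\ref{theorem:PCPfinite}. Part~(b) is then immediate, since the complement of a nonempty Zariski open set is a proper Zariski closed subset containing every PCF map of tail length $\ell$.

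I would take $U_{d,\ell}^n$ to be the locus of $f\in\End_d^n$ satisfying: (i) $\Crit_f$ is irreducible and of general type; (ii) the restriction $f^\ell|_{\Crit_f}:\Crit_f\to f^\ell(\Crit_f)$ is generically $1$-to-$1$; (iii) $f^\ell$ has multiplicity exactly $2$ along $\Crit_f$; and (iv) $f^i(\Crit_f)\not\subseteq\Crit_f$ for each $1\le i\le \ell-1$. Condition~(i) defines a nonempty open set by Theorem~\ref{theorem:critgentype}, and the other three are Zariski open (using upper semi-continuity of the ramification index along $\Crit_f$ for~(iii), and analogous semi-continuity statements for~(ii) and~(iv)). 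For nonemptiness of the intersection, the plan is to pass to a generic $f$ close to $f_0$ that lies in the open set of Theorem~\ref{theorem:critgentype}, so that $\Crit_f$ is an irreducible deformation of $\Crit_{f_0}$. A generic smooth point of the component $C$ persists as a smooth point of $\Crit_f$ at which $f^\ell$ still has multiplicity~$2$ and at which $f^\ell|_{\Crit_f}$ is still generically $1$-to-$1$ onto its image (both being minimal values of the relevant semi-continuous invariants); by irreducibility of $\Crit_f$, these local generic properties at points near $C$ become global generic properties on all of $\Crit_f$. Condition~(iv) transfers similarly from hypothesis~(2) on $f_0$.

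Now suppose some $f\in U_{d,\ell}^n$ is PCF of type $(k,\ell)$, so that $f^{k+\ell}(\Crit_f)\subseteq f^\ell(\Crit_f)$. By~(i) and~(ii) both $\Crit_f$ and $f^\ell(\Crit_f)$ are irreducible of dimension $n-1$ and the restriction $f^\ell|_{\Crit_f}$ is birational, so $f^\ell(\Crit_f)$ is itself of general type. Equality of dimensions forces $f^{k+\ell}(\Crit_f)=f^\ell(\Crit_f)$, so $f^k$ restricts to a surjective endomorphism of the general-type variety $f^\ell(\Crit_f)$. Exactly as in the proof of Theorem~\ref{theorem:PCPfinite}, such an endomorphism has finite order, so some iterate $f^{kr}$ fixes $f^\ell(\Crit_f)$ pointwise. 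But $f^{kr}\in\End_{d^{kr}}^n$ has $0$-dimensional fixed locus by Lemma~\ref{lemma:fixdim0}, contradicting $\dim f^\ell(\Crit_f)=n-1\ge 1$. Hence no $f\in U_{d,\ell}^n$ is PCF of tail length $\ell$, which gives~(a), and (b) follows because $U_{d,\ell}^n$ is nonempty Zariski open.

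The main obstacle is the nonemptiness argument for $U_{d,\ell}^n$. The hypothesis provides $f_0$ with the desired behavior only along a single component $C$ of a critical locus $\Crit_{f_0}$ that may well be reducible, whereas $U_{d,\ell}^n$ demands global properties after the critical locus is irreducified by deformation. Making it rigorous that multiplicity~$2$, generic birationality onto the image, and non-containment of the intermediate iterates all transfer from $C\subset\Crit_{f_0}$ to the irreducible $\Crit_f$ of a generic nearby $f$ requires careful bookkeeping with semi-continuity in flat families. Once this is done, the contradiction step is a routine adaptation of the finite-automorphism machinery already used in Section~\ref{section:pbpmapssparse}.
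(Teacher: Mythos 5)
Your overall skeleton matches the paper's: define a Zariski open locus where $\Crit_f$ is irreducible of general type and $f^\ell|_{\Crit_f}$ is birational onto its image, then run the finite-automorphism argument from Section~\ref{section:pbpmapssparse}. Your contradiction step is correct and essentially identical to the paper's. However, the nonemptiness of $U_{d,\ell}^n$ — which you flag at the end as the main obstacle but do not resolve — is not a bookkeeping issue; it is the mathematical heart of the proposition, and the semi-continuity you appeal to does not obviously go the way you need. The condition ``$f^\ell|_{\Crit_f}$ is generically $1$-to-$1$'' is \emph{not} known a priori to be open, and it is certainly not a ``minimal value'' of any obviously semi-continuous invariant: as $f$ degenerates to $f_0$, the irreducible $\Crit_f$ collapses to the reducible $\Crit_{f_0}$ and the image $f^\ell(\Crit_f)$ can lose degree (components of $\Branch_{f_0}$ can collide), so the generic degree of the restriction could a priori jump in either direction. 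If that generic degree were some $r>1$, your set $U_{d,\ell}^n$ would simply be empty and the entire argument would collapse.

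What the paper actually does is split this into two steps. First (Lemma~\ref{lemma:genericdegree}), a flatness/universal-family argument produces a nonempty open set over which $\Crit_f$ is irreducible of general type and the restriction $f^\ell|_{\Crit_f}$ has some \emph{constant} generic degree $r_{d,\ell}^n$. Second (Lemma~\ref{lem:gendegone}), one takes a one-parameter degeneration $\Fcal$ over $\Spec\FF\dspt$ from a generic $f$ to $f_0$, chooses a point $p\in C$ with $q=f_0^\ell(p)$ where $(f_0^\ell)^*(q)=2p+(\text{$(d^\ell)^n-2$ simple points})$, and performs an explicit fiber count: the total degree $(d^\ell)^n$ forces $(\Fcal_0^\ell)^{-1}(q)=(\Fcal^\ell)^{-1}(q)$, the $(d^\ell)^n-2$ simple points stay simple in the generic fiber, and the remaining length~$2$ must sit over a single section through $p$. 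Feeding this back through Lemma~\ref{lem:multiplicityofcriticalpoint} and Proposition~\ref{proposition:hyperplane}(b) shows the generic fiber also has exactly one double point and hence $r_{d,\ell}^n=1$. Your proposal gestures at a deformation of this kind but never does the fiber analysis; without it there is no argument that the degree equals $1$ rather than, say, $2$. Your extra conditions (iii) and (iv) in the definition of $U_{d,\ell}^n$ are also unused in the contradiction step and add further openness claims you would need to justify; the paper avoids them.
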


We start with some preliminary results.

\begin{lemma}
\label{lemma:genericdegree}  
Let~$n\ge3$ and~$d\ge 3$ and~$\ell\ge1$. There exists a positive
integer~$r_{d,\ell}^n$ and a non-empty Zariski-open set
$U_{d,\ell}^n\subset \End_d^n$ such that every~$f\in U_{d,\ell}^n$ has
the following properties\textup:
\begin{parts}
  \Part{(1)}
  The critical locus~$\Crit_f$ is irreducible and of general type.
  \Part{(2)}
  The map~$f^{\ell}|_{\Crit_f}:\Crit_f\to f^{\ell}(\Crit_f)$ is generically~$r_{d,\ell}^n$-to-$1$.
\end{parts}
\end{lemma}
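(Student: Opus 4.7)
The plan is to combine Theorem~\ref{theorem:critgentype} with a standard spreading-out argument for the generic degree of a morphism in a family.

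First, I apply Theorem~\ref{theorem:critgentype} to obtain a non-empty Zariski open subset $V\subseteq\End_d^n$ on which $\Crit_f$ is irreducible and of general type, and I form the universal critical locus
\[
\Ccal := \bigl\{(f,p)\in V\times\PP^n : p\in\Crit_f\bigr\},
\]
cut out in $V\times\PP^n$ by the single Jacobian-determinant equation. Since $V$ is irreducible and all fibers $\Ccal_f=\Crit_f$ are irreducible of the same dimension $n-1$, the total space $\Ccal$ is itself irreducible. Restricting the universal $\ell$-th iterate map $(f,p)\mapsto(f,f^\ell(p))$ to $\Ccal$ gives a morphism
\[
\Phi : \Ccal \longrightarrow V\times\PP^n,
\]
whose image $\Bcal:=\Phi(\Ccal)$ is closed (by properness of $\PP^n$ over $V$) and irreducible. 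Each $f\in\End_d^n$ is a finite morphism of $\PP^n$, so each fiber $\Bcal_f$ equals $f^\ell(\Crit_f)$, an irreducible $(n-1)$-dimensional subvariety of $\PP^n$. Hence $\dim\Bcal=\dim V+n-1=\dim\Ccal$, so $\Phi:\Ccal\to\Bcal$ is dominant and generically finite; let $r=r_{d,\ell}^n$ denote its generic degree.

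It remains to pass from a generic-degree statement over $\Bcal$ to a fiberwise statement over $V$. By generic flatness combined with upper semicontinuity of fiber size, there is a non-empty Zariski open subset $\Bcal^\circ\subseteq\Bcal$ over which every fiber of $\Phi$ consists of exactly $r$ points, so $\Zcal:=\Bcal\setminus\Bcal^\circ$ is a proper closed subset of $\Bcal$ with $\dim\Zcal\le\dim V+n-2$. Applying the fiber-dimension theorem to the projection $\Zcal\to V$ then yields a non-empty Zariski open subset $U_{d,\ell}^n\subseteq V$ on which the fibers of $\Zcal\to V$ have dimension at most $n-2$. For every $f\in U_{d,\ell}^n$ the intersection $\Zcal\cap\Bcal_f$ is therefore a proper closed subset of the irreducible $(n-1)$-dimensional variety $\Bcal_f=f^\ell(\Crit_f)$, so $\Bcal^\circ\cap\Bcal_f$ is a dense open subset of $\Bcal_f$. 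Over each $q$ in this intersection the $\Phi$-fiber has exactly $r$ points, all lying in $\Ccal_f=\Crit_f$, so $f^\ell|_{\Crit_f}:\Crit_f\to f^\ell(\Crit_f)$ is generically $r$-to-$1$.

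The step that requires the most care is the final dimension bookkeeping: the inequality $\dim\Zcal<\dim\Bcal$ does not by itself force $\Zcal$ to miss entire fibers of $\Bcal\to V$, and one genuinely needs the fiber-dimension theorem for $\Zcal\to V$ together with the equidimensionality of the fibers of $\Bcal\to V$ to reach that conclusion. Once that is in hand, property~(1) is immediate from the choice of $V$ and property~(2) is what was just established, so the lemma follows.
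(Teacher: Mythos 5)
Your proof is correct and follows essentially the same framework as the paper's: work over the open locus supplied by Theorem~\ref{theorem:critgentype}, form the universal critical locus $\Ccal$ and the image $\Bcal$ of its $\ell$-th iterate, let $r=r_{d,\ell}^n$ be the generic degree of $\Ccal\to\Bcal$, find the open $\Bcal^\circ\subset\Bcal$ where the fiber has exactly $r$ points, and then spread out to an open subset of the parameter space over which $\Bcal^\circ$ meets every fiber of $\Bcal$.

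The one substantive place where you diverge is the spreading-out step. You set $\Zcal=\Bcal\setminus\Bcal^\circ$, note $\dim\Zcal\le\dim V+n-2$, and invoke upper semicontinuity of fiber dimension for $\Zcal\to V$ together with equidimensionality of the fibers of $\Bcal\to V$. The paper instead first shrinks to a locus of the base over which the projection from $\hat{\mathcal{B}}_\ell$ is flat (generic flatness), uses that flat morphisms of finite type are open, and declares $U_{d,\ell}^n$ to be the open image of $\hat{\mathcal{B}}_\ell^\circ$ under that projection. Both are standard spreading-out arguments of roughly equal length and either works; your version arguably has slightly fewer moving parts. Two minor remarks. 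First, to produce a $\Bcal^\circ$ over which every fiber of $\Phi$ has \emph{exactly} $r$ distinct points you need generic \'etaleness, which in characteristic $0$ follows from generic smoothness; ``generic flatness combined with upper semicontinuity of fiber size'' alone only gives a locus of constant scheme-theoretic degree $r$, and a flat finite map of degree $r$ can have fewer than $r$ set-theoretic points in a fiber. This does not affect the conclusion. Second, the paper additionally shrinks its open set to one over which $f,\dots,f^\ell$ have trivial $\PGL_{n+1}$-automorphism groups; that condition is not reflected in the lemma's statement and, as your argument shows, is not needed to establish it.
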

\begin{proof}
We first observe that there is a non-empty Zariski open set
$(U_{d,\ell}^n)_1\subset \End_d^n$ such that for all~$f\in(U_{d,\ell}^n)_1$:
\begin{parts}
  \Part{(1)}
  The critical locus~$\Crit_f$ is irreducible and of general type.
  The fact that this is a non-empty open condition follows from
  Theorem~\ref{theorem:critgentype}.
  \Part{(2$'$)}
  The maps~$f,f^2,\ldots, f^{\ell}$ have no non-trivial automorphisms.\footnote{In general, the
    automorphism group of a dynamical system~$f:\PP^n\to\PP^n$ is
    $\Aut(f):=\{\a\in\PGL_{n+1}:\a\circ{f}=f\circ\a\}$. It is proven in~\cite{MR2741188}
    that if~$f$ is a morphism and~$d\ge2$, then~$\Aut(f)$ is finite, and that
    the set of~$f\in\End_d^n$ with~$\Aut(f)\ne1$ is a Zariski closed set.}
  The fact that this is a non-empty open condition follows from~\cite{MR2741188}.
\end{parts}
Then over~$(U_{d,\ell}^n)_1$ there is  a universal family
\[
\mathcal{F}: \PP^n\times (U_{d,\ell}^n)_1\to\PP^n\times (U_{d,\ell}^n)_1,
\] 
with universal critical locus~$\hat\Crit\to(U_{d,\ell}^n)_1$. We
denote by~$\hat{\mathcal{B}}_{\ell}$ the underlying reduced variety of the
image~$\mathcal{F}^{\ell}(\hat\Crit)$ of the universal critical locus
under the~$\ell$th iterate of~$\mathcal{F}$.

The restriction~$\mathcal{F}^{\ell}|_{\hat\Crit}:\hat\Crit\to\hat{\mathcal{B}}_{\ell}$ is
generically finite, so has some generic degree~$r_{d,\ell}^n$. There is an
open set~$(U_{d,\ell}^n)_2\subset (U_{d,\ell}^n)_1$ over which~$\pi_{\hat{\mathcal{B}}_{\ell}}$
is flat, as well as an open set~$\hat{\mathcal{B}}_{\ell}^{\circ}\subset
\pi_{\hat{\mathcal{B}}_{\ell}}^{-1}\bigl((U_{d,\ell}^n\bigr)_2)$ over which
$\mathcal{F}^{\ell}|_{\hat\Crit}$ is \'etale of degree exactly~$r_{d,\ell}^n$. Since
a flat map of finite type of Noetherian schemes is open, the set
\[
U_{d,\ell}^n:=\pi_{\hat{\mathcal{B}}_{\ell}}(\hat{\mathcal{B}}_{\ell}^{\circ})\subset (U_{d,\ell}^n)_2
\]
is open, and over~$U_{d,\ell}^n$, the map
\[
\mathcal{F}^{\ell}|_{\hat\Crit}:\hat\Crit\to\hat{\mathcal{B}}_{\ell}
\]
has generic degree~$r_{d,\ell}^n$ by construction.
\end{proof}

\begin{lemma}
\label{lem:gendegone}
Suppose that there is an endomorphism~$f_0\in\End_d^n$ satisfying the
hypothesis~$(1)$--$(4)$ of Proposition~$\ref{prop:taillengthell}$.
Then the degree~$r_{d,\ell}^n$ described in
Lemma~$\ref{lemma:genericdegree}$ satisfies~$r_{d,\ell}^n=1$.
\end{lemma}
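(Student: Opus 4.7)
The plan is to prove the lemma by contradiction via upper semicontinuity of fiber length. The strategy is to show that the scheme-theoretic fiber of a naturally defined map at a well-chosen point has length exactly~$1$, which forces $r_{d,\ell}^n \le 1$.

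First, I analyze the fiber of $f_0^\ell|_{\Crit_{f_0}}\colon \Crit_{f_0} \to f_0^\ell(\Crit_{f_0})$ at a generic point $q \in B$. By hypothesis~(1), $C$ is the unique component of $\Crit_{f_0}$ mapping onto $B$; every other component $C'$ satisfies $\dim(f_0^\ell(C') \cap B) < \dim B$, so a generic $q \in B$ avoids the images of all other components. Combined with hypothesis~(3), the set-theoretic preimage of $q$ in $\Crit_{f_0}$ is a single point $p \in C$, and for generic $q$ the point $p$ is smooth in $\Crit_{f_0}$ and lies on no other component. Since we work in characteristic~$0$, the generically $1$-to-$1$ map $f_0^\ell|_C$ is \'etale on a dense open of~$C$, which we arrange to contain~$p$. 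Hence the scheme-theoretic fiber at $q$ is a single reduced point of length~$1$.

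To globalize, let $\hat\Crit^{\textup{main}} \subset \PP^n \times \End_d^n$ denote the closure of $\hat\Crit|_{U_{d,\ell}^n}$; this is an irreducible component of $\hat\Crit$ since $U_{d,\ell}^n$ is irreducible with irreducible fibers. Define $\Phi\colon \hat\Crit^{\textup{main}} \to \PP^n \times \End_d^n$ by $(p, f) \mapsto (f^\ell(p), f)$, and let $\Ical$ be its image, an irreducible subscheme of the same dimension as the source. Then $\Phi\colon \hat\Crit^{\textup{main}} \to \Ical$ is proper with $0$-dimensional fibers (as each $f^\ell$ is finite) and has generic degree $r_{d,\ell}^n$ by Lemma~\ref{lemma:genericdegree}. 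Upper semicontinuity of fiber length then forces every fiber to have length at least~$r_{d,\ell}^n$. The fiber at $(q, f_0) \in \Ical$ has length~$1$ by the local analysis above: since $\hat\Crit^{\textup{main}}$ is a flat family of divisors (an irreducible component of the Jacobian hypersurface), and since its special fiber contains the smooth divisor~$C$ with multiplicity~$1$ near~$p$, the scheme $\hat\Crit^{\textup{main}}$ is smooth at $(p, f_0)$; the restriction of $\Phi$ to the special fiber is $f_0^\ell|_C$, which is \'etale at~$p$, so $\Phi$ itself is \'etale at $(p, f_0)$. Combining with semicontinuity yields $r_{d,\ell}^n = 1$.

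The main technical obstacle is verifying that~$C$ appears with multiplicity~$1$ in the special fiber of $\hat\Crit^{\textup{main}}$ over $f_0$, which makes $\hat\Crit^{\textup{main}}$ smooth at $(p, f_0)$. This follows from the flatness of $\hat\Crit^{\textup{main}}$ over $\End_d^n$ as an irreducible divisor family, combined with the smoothness of the reduced special fiber at~$p$, using both that~$p$ is smooth in~$C$ and that only~$C$ passes through~$p$ in $\Crit_{f_0}$.
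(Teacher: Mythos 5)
Your overall strategy is sound and genuinely different from the paper's. The paper pulls $f_0$ into a one-parameter arc $\Spec\bigl(\FF\dspt\bigr)\to\End_d^n$ meeting $U_{d,\ell}^n$ generically, produces a section $P$ of the relative critical locus through $p$, and compares the explicit preimage counts $(\Fcal_\eta^\ell)^{-1}(Q_\eta)$ and $(f_0^\ell)^{-1}(q)$ to force the generic fiber to be birational (invoking Proposition~\ref{proposition:hyperplane} at the degenerate point). You instead run a global argument: build the finite morphism $\Phi$ on the universal critical locus, note the generic fiber has length $r_{d,\ell}^n$, compute the fiber over $(q,f_0)$ to have length~$1$, and invoke upper semicontinuity of fiber length to conclude $r_{d,\ell}^n\le1$. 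As a roadmap, this works, and it has the virtue of avoiding the delicate local-ring bookkeeping in the paper's Claim about smoothness of $\pi_{\hat\Crit}$ and $\pi_{\hat\Branch_\ell}$ over $\Spec\bigl(\FF\dspt\bigr)$. There are, however, two localized gaps you should patch.

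First, the justification that $\hat\Crit^{\textup{main}}$ is smooth at $(p,f_0)$ is flawed as stated. You argue: flatness as an irreducible divisor family, plus smoothness of the \emph{reduced} special fiber at $p$, plus the fact that only $C$ passes through $p$, implies $C$ has multiplicity~$1$ there. This implication is false; the flat family $\{y^2=t\}\subset\PP^1\times\AA^1_t$ is an irreducible divisor family whose reduced special fiber at $t=0$ is a single smooth point, yet the scheme-theoretic fiber is non-reduced there. What you actually need is that the scheme $\Crit_{f_0}$ (defined by $\det J_{f_0}=0$) is reduced along the generic point of $C$, and this is precisely where hypothesis~(4) of Proposition~\ref{prop:taillengthell} enters: multiplicity~$2$ of $f_0$ along $\Crit_{f_0}$ forces $\det J_{f_0}$ to vanish to order exactly~$1$ along $C$. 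With that in hand, $p$ is a smooth point of the scheme $\Crit_{f_0}$, so $\partial(\det J)/\partial x_i\ne0$ at $(p,f_0)$ for some $i$, and $\hat\Crit$ is smooth (and flat over $\End_d^n$) there. You should invoke hypothesis~(4) explicitly; as written, it appears nowhere in your argument even though it is indispensable.

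Second, the claim ``$\Phi$ is \'etale at $(p,f_0)$'' overreaches: \'etaleness requires $\Phi$ to be flat onto $\Ical$, and $\Ical$ need not be smooth at $(q,f_0)$. What you actually need and can prove is that $\Phi$ is \emph{unramified} at $(p,f_0)$, which already forces the scheme-theoretic fiber there to be a single reduced point. Unramifiedness is a fiberwise condition, so it reduces to checking that $\Crit_{f_0}\xrightarrow{f_0^\ell}\PP^n$ is unramified at $p$; near $p$ we have $\Crit_{f_0}=C$ as schemes (again using hypothesis~(4)), and $f_0^\ell|_C$ is birational by hypothesis~(3), hence \'etale on a dense open of $C$ (characteristic zero), which we may arrange to contain $p$. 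Composing with the closed immersion $B\hookrightarrow\PP^n$ preserves unramifiedness. With these two repairs, your proof is correct and is a cleaner alternative to the paper's arc-based argument, at the cost of needing to check that $\Phi$ is genuinely finite (proper with finite fibers), which you handle implicitly.
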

\begin{proof}
We are given a map~$f_0$ that satisfies the four hypotheses of
Proposition~\ref{prop:taillengthell}.
Since~$f_0$ is in the closure of~$U_{d,\ell}^n$, we can find a map
\[
F:\Spec\bigl(\FF\dspt\bigr)\to \End_d^n
\]
such that the generic point~$\Spec\bigl(\FF\drpt\bigr)$ maps to~$U_{d,\ell}^n$ and the
special point at~$t=0$ maps to~$f_0$. Taking a ramified base change
if necessary, we obtain from~$F$ a family of degree~$d$ morphisms over
$\Spec\bigl(\FF\dspt\bigr)$,
\[
\Fcal:\Spec\bigl(\FF\dspt\bigr)\times \PP^n \to \Spec\bigl(\FF\dspt\bigr)\times \PP^n.
\]

Denote by~$\hat\Crit$ the underlying reduced scheme of the critical
locus of~$\Fcal$. It has pure codimension one. Denote by
$\hat\Branch_\ell$ the underlying reduced variety of the image
$\Fcal^{\ell}(\hat\Crit)$ of the universal critical locus under
the~$\ell$th iterate of~$\Fcal$. Denote by~$\Fcal_{\eta}$
the restriction of~$\Fcal$ to the generic fiber
$\Spec\bigl(\FF\drpt\bigr)\times \PP^n$ and by~$\Fcal_{0}$ the
restriction of~$\Fcal$ to the special fiber~$\PP^n_{\FF}$. By
construction, we have:
\begin{parts}
  \Part{\textbullet}
  $\Fcal_{0}=f_0$
  \Part{\textbullet}
  $\hat\Crit_{\eta}$ is irreducible general type.
  \Part{\textbullet}
  $\Fcal^{\ell}_{\eta}|_{\mathcal{\Crit_{\eta}}}$ has degree~$r_{d,\ell}^n$.
\end{parts}
Further, since~$\deg(\Fcal_0)=\deg(\Fcal)=d$, the map
$\Fcal$ is not ramified along the special fiber. We conclude
that~$\hat\Crit$ is the Zariski closure of~$\hat\Crit_\eta$ and that
$\hat\Branch_{\ell}$ is the Zariski closure of~$(\hat\Branch_\ell)_{\eta}$.

Let~$p \in C$ be a smooth point such that:
\begin{itemize}
\item The points $f_0(p),f_0^2(p),\ldots,f_0^{\ell}(p)=q$ are not in the critical locus~$\Crit_{f_0}$.
\item The point $p$ is not in the critical loci of any of the restrictions
  \[
  (f_0)|_C,(f_0^2)|_C,\ldots (f_0^{\ell})|_{C}.
  \]
\end{itemize}
Then $p$ and~$q=f_0^{\ell}(p)$ satisfy the conditions in
Proposition~\ref{proposition:hyperplane}(a) with respect to
$f_0^\ell$, that is, the divisor~$(f_0^\ell)^{*}(q)$ is the sum of
$2p$ and~$(d^\ell)^n-2$ points having multiplicity~$1$.

On the one hand,~$(\Fcal_0^\ell)^{-1}(q)$ is a subscheme
of~$(\Fcal^\ell)^{-1}(q)$, while on the other hand, both schemes have
degree $(d^\ell)^n$
over~$\FF$. Therefore~$(\Fcal_0^\ell)^{-1}(q)=(\Fcal^\ell)^{-1}(q)$. This
means that~$p$ has multiplicity exactly~$2$ in
$(\Fcal^\ell)^{-1}(q)$. Since the proof of Lemma
\ref{lem:multiplicityofcriticalpoint} was local, we conclude that
$\hat\Crit$ is smooth at~$p$, and that~$\hat\Branch_\ell$ is smooth
at~$q$. We also have that~$(\hat\Branch_\ell)_0$ is smooth at~$q$.

\begin{claim}
  The following are true:
  \begin{parts}
    \Part{\textbullet}
    The map~$\pi_{\hat\Crit}:\hat\Crit\to \Spec\bigl(\FF\dspt\bigr)$ is smooth at~$p$.
    \Part{\textbullet}
    The map~$\pi_{\hat\Branch_\ell}:\hat\Branch_\ell\to\Spec\bigl(\FF\dspt\bigr)$ is smooth at~$q$.
  \end{parts}
\end{claim}

\begin{proof}[Proof of Claim]
We follow the proof of
Lemma~\ref{lem:multiplicityofcriticalpoint}. Let~$(t,x_1,\ldots x_n)$
be coordinates at~$p$, so the completion of the local ring to
$\Spec\bigl(\FF\dspt\bigr)\times\PP^n$ at~$p$ is~$\FF[\![t, x_1,\ldots, x_n]\!]$,
and let~$(z_1,\ldots z_n)$ be coordinates at~$q$, so the completion of
the local ring to~$\Spec\bigl(\FF\dspt\bigr)\times\PP^n$ at~$q$ is
$\FF[\![t,z_1,\ldots,{z}_n]\!]$. Using these coordinates, we suppose that
$\Fcal^\ell$ is given by
\[
z_i=f_i(t, x_1,\ldots, x_n).
\]
Without loss of generality, we may assume that
\[
z_i=f_i(t, x_1,\ldots, x_n)=x_i\quad\text{for~$i=1, \ldots, n-1$.}
\]
As in the proof of Lemma~\ref{lem:multiplicityofcriticalpoint}, we
conclude that~$(t, x_1,\ldots, x_{n-1})$ restrict to local coordinates
on~$\hat\Crit$, and that~$(t, z_1,\ldots, z_{n-1})$ restrict to local
coordinates on~$\hat\Branch_\ell$. In these coordinates, the maps
$\pi_{\hat\Crit}$ and~$\pi_{\hat\Branch_\ell}$ are obtained, respectively,
by forgetting all of the~$x_i$ and~$z_i$ coordinates, and thus they
are smooth maps. This completes the proof of the claim.
\end{proof}

We resume the proof of Lemma~\ref{lem:gendegone}. The claim implies
that there exists a section
\[
P:\Spec\bigl(\FF\dspt\bigr)\to\hat\Crit
\]
with~$P(0)=p$. Then~$Q:=\Fcal^\ell\circ P$ is a section of
$\hat\Branch$. Since~$P_\eta\in\hat\Crit_\eta$, we see that~$P_\eta$
appears in~$(\Fcal_{\eta}^\ell)^{-1}(Q_\eta)$ with multiplicity at
least~$2$. On the other hand, by construction we know that
$(\Fcal^\ell)^{-1}(Q)|_{t=0}$ has~$d^n-1$ distinct~$\FF$-points, and
that~$(d^\ell)^n-2$ of them appear with multiplicity exactly~$1$. Hence
$(\Fcal_{\eta}^\ell)^{-1}(Q_\eta)$ must have at least~$(d^\ell)^n-2$ distinct
$\FF\drpt$-points appearing with multiplicity exactly~$1$. Therefore
$(\Fcal_{\eta}^\ell)^{-1}(Q_\eta)$ must have exactly~$(d^\ell)^n-1$
distinct~$\FF\drpt$-points, with exactly one of them,~$P_\eta$, appearing with
multiplicity~$2$. 
Proposition~\ref{proposition:hyperplane}(b)
implies that
$(\Fcal_\eta^\ell)|_{\hat\Crit_{\eta}}$ has degree~$1$, so~$r_{d,\ell}^n=1$,
as desired.
\end{proof}

We can now finish the proof of Proposition~\ref{prop:taillengthell}.

\begin{proof}[Proof of  Proposition~$\ref{prop:taillengthell}$]
Suppose that, for some fixed~$\ell$, the hypotheses of Proposition
\ref{prop:taillengthell} are satisfied. Then, by Lemmas
\ref{lemma:genericdegree} and~\ref{lem:gendegone}, there
is a non-empty Zariski open subset~$U_{d,\ell}^n\subset \End_d^n$ such
that for all~$f\in U_{d,\ell}^n$:
\begin{parts}
  \Part{\textbullet}
 $\Crit_f$ is irreducible and of general type.
  \Part{\textbullet}
  The map~$f^{\ell}|_{\Crit_f}:\Crit_f \to f^{\ell}(\Crit_f)$ is generically $1$-to-$1$.
\end{parts}
It remains to show that if~$f\in U_{d,\ell}^n$, then~$f$ is not PCF of
tail-length~$\ell$. Suppose we have some~$f\in U_{d,\ell}^n$. Then
$\Crit_f$ is irreducible and of general type, and since
$f^{\ell}|_{\Crit_f}:\Crit_f \to f^{\ell}(\Crit_f)$ is generically
$1$-to-$1$, we know that~$f^{\ell}(\Crit_f)$ is birational to
$\Crit_f$, and hence~$f^{\ell}(\Crit_f)$ is irreducible and of general
type. Assume for contradiction that~$f$ is PCF of tail-length~$\ell$
and some period~$k>0$. Then~$f^k$ defines an endomorphism of
$f^{\ell}(\Crit_f)$. As in Step 2 of Theorem~\ref{theorem:PCPfinite},
we conclude that~$f^k|_{f^{\ell}(\Crit_f)}$ is a finite-order
automorphism. Thus there exists some~$r>0$ such that
$f^{kr}|_{f^{\ell}(\Crit_f)}$ is the identity, i.e., such that
$\Crit_f\subseteq\Fix(f^{kr})$. But~$\Crit_f$ is a hypersurface, so it
has dimension~$n-1\ge1$, while Lemma~\ref{lemma:fixdim0} tells us that
$\Fix(f^{kr})$ has dimension~$0$.  The contradiction completes the
proof of Proposition~\ref{prop:taillengthell}.
\end{proof}

It is now a simple matter to prove that PCF maps with tail length
$\ell=1$ are sparse.

\begin{theorem}
\label{thm:Preperiodone}
Let~$n\ge3$ and~$d\ge3$. Then
\[
\{f\in \End_d^n :\text{$f^{k}(\Crit_f)\subseteq f(\Crit_f)$ for some $k\ge2$} \}
\]
is contained in a proper closed subvariety of~$\End_d^n$.
\end{theorem}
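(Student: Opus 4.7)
The plan is to reduce this theorem to the machinery already assembled in Sections~\ref{section:pbpmapssparse}, \ref{section:hyperplaneconstruction}, and~\ref{section:fixedtaillength}. First I would observe that the condition $f^k(\Crit_f) \subseteq f(\Crit_f)$ for some $k\ge2$ is equivalent to $f$ being PCF with tail length $\ell\le 1$. Indeed, if $f$ is PCP, so that $f^{k'}(\Crit_f)\subseteq \Crit_f$ for some $k'\ge 1$, then applying $f$ gives $f^{k'+1}(\Crit_f)\subseteq f(\Crit_f)$, while tail-length-one maps satisfy the condition by definition. Conversely, any $f$ satisfying the stated condition is easily seen to be PCF of type $(k'',\ell)$ with $\ell\in\{0,1\}$. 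Thus the set in the theorem decomposes as the union of the PCP set and the set of PCF maps of exact tail length $1$.

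The PCP piece is exactly the content of Theorem~\ref{theorem:PCPfinite}, which already places it in a proper Zariski-closed subset of $\End_d^n$ whenever $d\ge 3$ and $n\ge 2$. For the exact tail-length-one piece, the strategy is to apply Proposition~\ref{prop:taillengthell} with $\ell=1$. That proposition reduces the entire question to exhibiting one map $f_0\in\End_d^n$ whose critical locus possesses a component $C$, with $B$ a component of $f_0(\Crit_{f_0})$, satisfying: (1) $C$ is the unique component of $\Crit_{f_0}$ mapping to $B$; (2) none of $f_0^0(C),\ldots,f_0^{\ell-1}(C)$ lies in $\Crit_{f_0}$, which is vacuous when $\ell=1$; (3) $f_0|_C:C\to B$ is generically $1$-to-$1$; and (4) $f_0$ has multiplicity $2$ along $C$.

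The required witness map $f_0$ is supplied by the hyperplane construction. Namely, I would take $f_0$ to be a map as produced by Proposition~\ref{proposition:hyperplane}(a), which has a branch point $q$ with $f_0^*(q)=2p+p_1+\cdots+p_{d^n-2}$. Letting $B$ be the unique component of $\Branch_{f_0}$ passing through $q$ and $C$ the unique component of $\Crit_{f_0}$ through $p$, the four assertions of Proposition~\ref{proposition:hyperplane}(b) match, in order, hypotheses (1), (3), and (4) of Proposition~\ref{prop:taillengthell} for $\ell=1$, with hypothesis (2) vacuous. Proposition~\ref{prop:taillengthell}(b) then yields that the set of PCF maps with exact tail length $1$ is not Zariski dense in $\End_d^n$, hence is contained in a proper Zariski-closed subset.

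To conclude, since $\End_d^n$ is irreducible as an open subset of the irreducible variety $\PP^N$, the union of the two proper Zariski-closed subsets produced by the two cases is again a proper Zariski-closed subset, and it contains the set described in the theorem. The main obstacle here is not in this proof itself but in the work done upstream: constructing a map whose critical locus behaves generically (the hyperplane construction) and setting up the deformation-to-the-generic-fiber argument that turns a single example into a statement about all maps in a dense open set (Lemmas~\ref{lemma:genericdegree} and~\ref{lem:gendegone}). Once those tools are in place, the tail-length-one case is assembled with essentially no further input.
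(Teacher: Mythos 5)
Your argument is correct and takes essentially the same route as the paper: invoke Proposition~\ref{prop:taillengthell} with $\ell=1$, using the hyperplane construction of Proposition~\ref{proposition:hyperplane} as the witness map $f_0$, and apply the resulting nondensity statement. The only difference is expository: you explicitly decompose into the PCP piece (citing Theorem~\ref{theorem:PCPfinite}) and the exact tail-length-$1$ piece, whereas the paper's terse proof handles both at once by noting that the open set $U_{d,1}^n$ already excludes PCP maps, since $\Crit_f$ is irreducible and of general type there.
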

\begin{proof}
The map constructed in Proposition~\ref{proposition:hyperplane}
satisfies the hypotheses of Proposition~\ref{prop:taillengthell} for
$\ell=1$. We conclude that there is a non-empty Zariski open subset
$U_{d,1}^n\subset\End_d^n$ such that for all~$f\in U_{d,1}^n$, the
map~$f$ is not PCF of tail-length~$1$.
\end{proof}

\section{PCF maps with tail-length~$2$ are sparse}
\label{section:taillength2}

The main result of this section is as stated in the title. As in the
previous section, we begin with a number of preliminary results.

\begin{lemma}
\label{lemma:rto1tosto1}
Let~$n\ge2$, and let~$f:\PP^n\to\PP^n$ be a morphism of degree
$d\ge2$. Suppose that~$H\subset\PP^n$ is a hypersurface satisfying\textup:
\begin{parts}
  \Part{\textbullet}
 $H$ is not contained in~$\Crit_f$.
  \Part{\textbullet}
  $f(H)$ is not contained in~$\Branch_f$.
  \Part{\textbullet}
  $f|_H$ is generically~$r$-to-$1$ for some~$r\ge2$.
\end{parts}
Then there exists an~$s<r$ and an automorphism~$\alpha\in\Aut(\PP^n)$ such that
$f|_{\alpha(H)}$ is generically~$s$-to-$1$. 
\end{lemma}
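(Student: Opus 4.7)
The plan is to show, via a dimension count on an incidence variety, that for a Zariski-generic $\alpha \in \PGL_{n+1}$ the map $f|_{\alpha(H)}$ is in fact generically $1$-to-$1$, so that the lemma holds with $s = 1$.

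Introduce the incidence variety
\[
Z := \{(\alpha, x_1, x_2) \in \PGL_{n+1} \times H \times H : x_1 \neq x_2,\ f(\alpha(x_1)) = f(\alpha(x_2))\},
\]
equipped with its two projections $\pi_{12}$ to $H \times H$ and $\pi$ to $\PGL_{n+1}$. The crucial estimate is $\dim Z \leq n^2 + 3n - 2$. To establish it, fix a pair $(x_1, x_2)$ of distinct points of $H$ and consider the orbit map
\[
\psi\colon \PGL_{n+1} \longrightarrow (\PP^n \times \PP^n) \setminus \Delta,\qquad \alpha \mapsto (\alpha(x_1), \alpha(x_2)),
\]
which is surjective (since $\PGL_{n+1}$ acts transitively on ordered pairs of distinct points of $\PP^n$) with equidimensional fibers of dimension $n^2$. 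The auxiliary variety
\[
W := \{(y_1, y_2) \in \PP^n \times \PP^n : y_1 \neq y_2,\ f(y_1) = f(y_2)\}
\]
has dimension $n$, since $f$ is finite of topological degree $d^n \geq 2$, so that $W$ is a finite cover of $\PP^n$ of generic fiber size $d^n - 1$ under the first projection. The fiber $\pi_{12}^{-1}(x_1, x_2)$ coincides with $\psi^{-1}(W)$ and hence has dimension $n + n^2$. Combining these gives $\dim Z \leq 2(n-1) + (n^2 + n) = n^2 + 3n - 2$.

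Next, for each $\alpha \in \PGL_{n+1}$, the fiber $Z_\alpha := \pi^{-1}(\alpha)$ parametrizes ordered distinct pairs of points in $H$ colliding under $f \circ \alpha$\textup; via the isomorphism $\alpha\colon H \to \alpha(H)$ these correspond precisely to ordered distinct pairs in fibers of $f|_{\alpha(H)}\colon \alpha(H) \to f(\alpha(H))$. Writing $s_\alpha := \deg(f|_{\alpha(H)})$, a generic point of the $(n-1)$-dimensional image $f(\alpha(H))$ contributes $s_\alpha(s_\alpha - 1)$ such pairs, so $\dim Z_\alpha = n - 1$ whenever $s_\alpha \geq 2$, while $\dim Z_\alpha \leq n - 2$ whenever $s_\alpha = 1$. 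Since
\[
\dim Z - \dim \PGL_{n+1} \leq (n^2 + 3n - 2) - (n^2 + 2n) = n - 2,
\]
either $\pi$ is dominant and the fiber-dimension theorem gives $\dim Z_\alpha \leq n - 2$ for all $\alpha$ in a dense Zariski-open subset of $\PGL_{n+1}$, or $\pi$ fails to be dominant and $Z_\alpha = \emptyset$ on a nonempty Zariski-open subset. In either case, a generic $\alpha$ satisfies $\dim Z_\alpha < n - 1$, which forces $s_\alpha = 1$. Taking any such $\alpha$ together with $s := 1 < 2 \leq r$ completes the proof.

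I expect the main obstacle to be establishing the dimension bound $\dim Z \leq n^2 + 3n - 2$, since it depends on the transitivity of the $\PGL_{n+1}$-action on ordered pairs of distinct points of $\PP^n$ to pin down the dimension of the fibers of $\pi_{12}$. Once that bound is in hand, comparison with $\dim \PGL_{n+1}$ is routine and the conclusion that $s_\alpha = 1$ for generic $\alpha$ follows formally from the fiber-dimension theorem.
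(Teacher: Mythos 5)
Your proof is correct, and it takes a genuinely different route from the paper's. The paper argues \emph{locally} over a one-parameter family $\Spec\bigl(\FF\dspt\bigr)$: it picks a line $L$, considers the collision points of $f|_H$ lying over $L\cap f(H)$, builds an explicit family $\a_t\in\PGL_{n+1}$ that normalizes two colliding points and fixes a coordinate frame, and then checks via a derivative computation that those two points separate for $t\ne0$, so the degree of $f|_{\a_t(H)}$ strictly drops; it only gets $s<r$, and the conclusion $s=1$ is deduced afterwards by iteration (the remark following the lemma). Your argument is \emph{global}: you set up the incidence variety $Z$ of $(\a,x_1,x_2)$ with $f(\a x_1)=f(\a x_2)$, use transitivity of $\PGL_{n+1}$ on ordered pairs of distinct points in $\PP^n$ to show every fiber of $\pi_{12}$ has dimension exactly $n^2+n$ (since $W=\{y_1\ne y_2: f(y_1)=f(y_2)\}$ is finite over $\PP^n$, hence of dimension $n$), conclude $\dim Z = n^2+3n-2 < \dim\PGL_{n+1}+(n-1)$, and then apply the fiber-dimension theorem componentwise to $\pi:Z\to\PGL_{n+1}$ to get $\dim Z_\a\le n-2$ for generic $\a$, which forces $s_\a=1$ directly. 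The one place your write-up is slightly informal is the dichotomy ``$\pi$ dominant vs.\ not'' — $Z$ may be reducible, so one should apply the fiber-dimension estimate to each irreducible component and intersect the resulting dense opens — but this is routine. Your approach is cleaner, avoids the formal-disk deformation machinery entirely, and immediately yields the stronger conclusion $s=1$ generically; the paper's local argument, while more involved, has the virtue of exhibiting a concrete one-parameter deformation that drops the degree and of paralleling the specialization technique used in the proof of Lemma~\ref{lem:gendegone}.
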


\begin{remark}
Applying Lemma~\ref{lemma:rto1tosto1} repeatedly, we see that there
exists an~$\alpha\in\Aut(\PP^n)$ such that~$f|_{\alpha(H)}$ is
generically~$1$-to-$1$.
\end{remark}

\begin{proof}[Proof of Lemma~$\ref{lemma:rto1tosto1}$]
Let~$e=\deg(H)$. Then~$f_*([H])=r[f(H)]$ is~$d^{n-1}e$ times the class
of a hyperplane, so
\[
\text{$f(H)$ is a hypersurface of degree~$D:=\dfrac{d^{n-1}e}{r}$,}
\]
where for notational convenience we let~$D$ denote the frequently
appearing quantiy~$D=D(d,n,e,r):=d^{n-1}e/r$.

We pick a line~$L$ such that the intersection~$L\cap f(H)$
has the following properties:
\begin{parts}
  \Part{\textbullet}
  $L$ and~$f(H)$ intersect transversally.
  \Part{\textbullet}
  The intersection consists of exactly~$\ddd$ smooth points of~$f(H)$, say
  \[
  L\cap f(H) = \{q_1,\ldots,q_\ddd\}.
  \]
  \Part{\textbullet}
  $L\cap f(H)\cap\Branch_f=\emptyset$, i.e.,~$q_i\notin\Branch_f$ for all~$1\le i\le\ddd$.
  \Part{\textbullet}
  $L\cap f(H)\cap f(\text{singular locus of~$H$})=\emptyset$.
\end{parts}
It is possible to find such a line~$L$ because
the ``bad locus'' that we must avoid has codimension at least 2 in~$\PP^n$.

By construction,~$L$ is not contained in~$\Branch_f$, so~$f^{-1}(L)$
is a curve~$C$ of degree~$d^{n-1}$. Also, the intersection~$C\cap H$ is transversal,
consisting of exactly~$d^{n-1}e=rD$ smooth points of~$H$, which we label as
\[
  C\cap H = \{ p_{i,j} : 1\le i\le\ddd,\; 1\le j\le r \}
\]
so that:
\begin{align*} 
p_{1,1}, \ldots, p_{1,r} \quad &\text{map to}\quad q_1\\[-2\jot]
&\vdots\\[-1\jot]
p_{i,1}, \ldots, p_{i,r} \quad &\text{map to}\quad q_i\\[-2\jot]
&\vdots\\[-1\jot]
p_{\ddd,1}, \ldots, p_{\ddd,r} \quad &\text{map to}\quad q_\ddd.
\end{align*}

Without loss of generality, we may assume that
\[
p_{1,1}=[1:0:\cdots:0] \quad\text{and}\quad p_{1,2}=[0:1:0\cdots:0].
\]
For all~$i$ and~$j$, the point~$p_{i,j}$ is not in the branch locus
of~$f$, so~$f$ induces isomorphisms of completions of local rings
of~$\PP^n$. Writing~$\Rcal_p$ for the completion of the local ring
at~$p$, we have
\begin{align*}
  f_{i,j} :  \Rcal_{p_{i,j}} &\longrightarrow \Rcal_{q_{i}}, \\
  f_{i,j_1,j_2}:= f_{i,j_2}^{-1}\circ f_{i,j_1} :  \Rcal_{p_{i,j_1}} &\longrightarrow \Rcal_{p_{i,j_2}}.
\end{align*}

We pick a local parametrization of~$C$ near~$p_{1,1}$,
i.e., we fix a map
\[
P_{1,1}:\Spec\bigl(\FF\dspt\bigr)\to C \quad\text{with}\quad P_{1,1}(0)=p_{1,1}
\]
that induces an isomorphism between~$\FF\dspt$ and the completion of
the local ring of~$C$ at~$p_{1,1}$. We then obtain a local parametrization
of~$C$ near~$p_{1,2}$ as follows: First we pre-compose~$P_{1,1}$ with
a specified involution of~$\Spec\bigl(\FF\dspt\bigr)$, then we apply
$f_{1,1,2}$. Specifically, we set
\begin{equation}
  \label{eqn:P12tf112P11negt}
  P_{1,2}(t)=f_{1,1,2}(P_{1,1}(-t)),
\end{equation}
and then
\begin{align}
  \label{eqn:fP12t}
  (f&\circ P_{1,2})(t) \notag\\
  &= (f\circ f_{1,1,2} \circ P_{1,1})(-t) \quad\text{from \eqref{eqn:P12tf112P11negt},}\notag\\
  &= \bigl(f\circ (f_{1,2})^{-1}\circ f_{1,1} \circ P_{1,1}\bigr)(-t)
  \quad\text{since $f_{1,1,2}:=(f_{1,2})^{-1}\circ f_{1,1}$,} \notag\\
  &= (f_{1,1} \circ P_{1,1})(-t)
  \quad\text{since $f\circ (f_{1,2})^{-1}=\Id$ on $U_{1,2}$,} \notag\\
  &= (f \circ P_{1,1})(-t)
  \quad\text{since $f_{1,1}=f$ on $U_{1,1}$.}
\end{align}
We note that ${\frac{d}{dt}}f(P_{1,2}(t))\bigr|_{t=0}\ne0$, so taking
derivatives of~\eqref{eqn:fP12t} and evaluating at~$t=0$ yields
\[
  0
  \ne {\frac{d}{dt}}(f\circ P_{1,2})(t)\Bigr|_{t=0}
  = {\frac{d}{dt}} (f\circ P_{1,1})(-t))\Bigr|_{t=0}
  = -{\frac{d}{dt}} (f\circ P_{1,1})(t))\Bigr|_{t=0}  .
\]

The condition on~$t$ that the points
\[
P_{1,1}(t),\, P_{1,2}(t),\, [0:0:1:0\cdots:0],\,\ldots,\,[0:\cdots:0:1],\, [1:1\cdots:1]
\]
are in general position is an open condition
that is satisfied at~$t=0$, and thus it is satisfied over
$\Spec\bigl(\FF\dspt\bigr)$.

There is thus a unique element~$\alpha_t\in\PGL_{n+1}\bigl(\FF\dspt\bigr)$
satisfying
\begin{align*}
\alpha_t([1:0:0:0:\cdots:0:0])&=P_{1,1} \\
\alpha_t([0:1:0:0:\cdots:0:0])&=P_{1,2} \\
\alpha_t([0:0:1:0:\cdots:0:0])&=[0:0:1:0:\cdots:0:0] \\
\vdots & \qquad\vdots \\
\alpha_t([0:0:0:0:\cdots:0:1])&=[0:0:0:0:\cdots:0:1] \\
\alpha_t([1:1:1:1:\cdots:1:1])&=[1:1:1:1:\cdots:1:1]. 
\end{align*}
We note that~$\a$ has the following properties:
\begin{align}
  &\alpha_0 = \Id\in\PGL_{n+1}(\FF). \\
  &\alpha_t(p_{1,1})\in C\bigl(\FF\dspt\bigr)\quad\text{and}\quad \alpha_t(p_{1,2})\in C\bigl(\FF\dspt\bigr).\\
  &0\ne{\frac{d}{dt}}f(\alpha_t(p_{1,1}))\Bigr|_{t=0}= -{\frac{d}{dt}}(f(\alpha_t(p_{1,2}))\Bigr|_{t=0}.
  \label{eqn:ddteqnegddt}
\end{align}
Condition~\eqref{eqn:ddteqnegddt} implies that for~$t\ne 0$, i.e.,
over the generic point $\Spec\bigl(\FF\drpt\bigr)$, we have
\[
f(\alpha_t(p_{1,1}))\ne f(\alpha_t(p_{1,2})).
\]
We conclude that $f(\alpha_t(p_{1,1}))$ and~$f(\alpha_t(p_{1,2}))$
restrict to distinct points of~$L\bigl(\FF\drpt\bigr)$.
 
We can parametrize the intersection points of
$\bigl(\alpha_t(H)\cap{C}\bigr)\bigl(\FF\dspt\bigr)$, i.e., we can find maps
\[
P_{i,j}:\Spec\bigl(\FF\dspt\bigr)\to \alpha_t(H)\cap C
\]
such that~$P_{i,j}(0)=p_{i,j}$ for all~$i,j$. We have that
\begin{align*}
  P_{1,1}(t)&=\alpha_t(p_{1,1}),\\
  P_{1,2}(t)&=\alpha_t(p_{1,2}), \\
  f\circ P_{i,j} &\in (f(\alpha_t(H))\cap L)(\Spec\bigl(\FF\dspt\bigr)).
\end{align*}
The conditions on~$t$ that
\[
  f\circ P_{i,1} \ne f\circ P_{1,1}
  \quad\text{and}\quad
  f\circ P_{i,1} \ne f\circ P_{1,2}
  \quad\text{for all $2\le i\le \ddd$}
 \]
are open conditions satisfied at~$t=0$, and thus are satisfied over
$\FF\dspt$. On the other hand, for~$t\ne 0$, i.e., over
$\Spec\bigl(\FF\drpt\bigr)$, we have
\[
f\circ P_{1,1}=f(\alpha_t(p_{1,1}))\ne
f(\alpha_t(p_{1,2})) =f\circ P_{1,2}.
\]
Thus~$(f(\alpha_t(H))\cap L)(\Spec\bigl(\FF\drpt\bigr))$ contains at least
$\ddd+1$ distinct points, specifically
\[
f\circ P_{1,1}, f\circ P_{2,1},\ldots,f\circ P_{\ddd,1}, f\circ P_{1,2}
\in \bigl(f(\alpha_t(H))\cap L\bigl)\bigl(\Spec\bigl(\FF\drpt\bigr)\bigr).
\]
 
Thus over~$\FF\drpt$ we have
\begin{align*}
\frac{d^{n-1}e}{\deg(f|_{\alpha_t(H)})}=\deg(f(\alpha_t(H)))\ge \abs{f(\alpha_t(H))\cap L} \ge \ddd+1 > \ddd .
\end{align*}
Since~$\ddd=d^{n-1}e/r$, this gives a strict inequality
\begin{align*}
\deg(f|_{\alpha_t(H)}) < r,
\end{align*}
which completes the proof of Lemma~$\ref{lemma:rto1tosto1}$ over the
algebraically closed characteristic~$0$ field~$\FF$.
\end{proof}

\begin{lemma}  
\label{lem:taillengthtwo}
Let~$n\ge3$ and~$d\ge3$ and $\ell=2$. Then there exists an~$f_0\in\End_d^n$ that
satisfies Conditions \textup{(1)--(4)} of
Proposition~$\ref{prop:taillengthell}$.  
\end{lemma}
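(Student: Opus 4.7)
The plan is to take the degree-$d$ morphism $f:\PP^n\to\PP^n$ furnished by Proposition~\ref{proposition:hyperplane} together with its distinguished critical component $C$ with $B:=f(C)$, and to define $f_0 := \alpha \circ f$ for a suitably chosen $\alpha \in \PGL_{n+1}(\FF)$. Since post-composition with $\alpha$ leaves the critical locus unchanged, $\Crit_{f_0}=\Crit_f$; in particular, $f_0$ retains multiplicity~$2$ along $C$, and $f_0|_C=\alpha\circ f|_C$ remains generically $1$-to-$1$ onto its image $f_0(C)=\alpha(B)$.

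For a generic choice of $\alpha$, the hypersurface $f_0(C)=\alpha(B)$ is not contained in $\Crit_{f_0}=\Crit_f$, which is Condition~(2) of Proposition~\ref{prop:taillengthell}. Condition~(4) then follows from Condition~(2), since $f_0$ is generically \'etale along $\alpha(B)$, so the ramification of $f_0^2$ along $C$ comes entirely from the first iterate $f_0$ and is therefore of multiplicity~$2$. For Condition~(3) I would factor
\[
  f_0^2\bigr|_C \;=\; \bigl(\alpha \circ f\bigr|_{\alpha(B)}\bigr) \circ \bigl(\alpha \circ f\bigr|_C\bigr).
\]
The second factor is generically $1$-to-$1$ by Proposition~\ref{proposition:hyperplane}(b)(3), and Lemma~\ref{lemma:rto1tosto1}, applied iteratively as in the remark following it (with $H=B$), produces an $\alpha$ for which $f\bigr|_{\alpha(B)}$ is generically $1$-to-$1$, making the first factor generically $1$-to-$1$ as well. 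Finally, Condition~(1) follows from Proposition~\ref{proposition:hyperplane}(b)(2): since $f(C')\ne B$ for every other irreducible component $C'$ of $\Crit_f$, a generic $\alpha$ ensures that the hypersurfaces $f_0^2(C')=\alpha(f(\alpha(f(C'))))$ and $f_0^2(C)=\alpha(f(\alpha(B)))$ are distinct.

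The main obstacle is verifying that a single $\alpha \in \PGL_{n+1}(\FF)$ can simultaneously satisfy all these requirements: it must reduce the generic degree of $f\bigr|_{\alpha(B)}$ to~$1$ (via iterated Lemma~\ref{lemma:rto1tosto1}), yet also satisfy the various open conditions needed for Conditions~(1) and~(2), as well as the hypotheses ``$H\not\subseteq\Crit_f$'' and ``$f(H)\not\subseteq\Branch_f$'' required at each iteration of the lemma. I would dispose of this by observing that the proof of Lemma~\ref{lemma:rto1tosto1} actually produces the desired degree-reducing $\alpha_t$ as a generic member of a $1$-parameter family, so the set of good $\alpha$ is Zariski dense inside an open subset of $\PGL_{n+1}$. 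Intersecting with the open conditions needed for~(1),~(2), and the repeated applicability of Lemma~\ref{lemma:rto1tosto1} therefore leaves a nonempty set of~$\alpha$, and any such~$\alpha$ produces an~$f_0 := \alpha \circ f$ satisfying all four conditions of Proposition~\ref{prop:taillengthell} with $\ell=2$.
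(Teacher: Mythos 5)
Your proposal starts from the explicit hyperplane‐construction map of Proposition~\ref{proposition:hyperplane}, but that map has a \emph{reducible} critical locus (its coordinates are products of linear forms), and that creates gaps that the paper's argument sidesteps entirely. The paper does not build $f_0$ directly on top of the hyperplane construction; it first applies Proposition~\ref{prop:taillengthell} with $\ell=1$ (whose existence hypothesis \emph{is} supplied by the hyperplane construction) to obtain a nonempty open set $U^n_{d,1}$, and then begins from a generic $f\in U^n_{d,1}$, which has $\Crit_f$ \emph{irreducible} and of general type, $f\colon\Crit_f\to\Branch_f$ generically $1$-to-$1$, multiplicity~$2$ along $\Crit_f$, and $f^2(\Crit_f)\not\subseteq\Branch_f$. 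With $\Crit_f$ irreducible, Condition~(1) of Proposition~\ref{prop:taillengthell} is automatic, Condition~(2) reduces to $\Branch_f\not\subseteq\Crit_f$ (a degree comparison of irreducible hypersurfaces), and the hypotheses of Lemma~\ref{lemma:rto1tosto1} with $H=\Branch_f$ come for free from the ``not PCF of tail length $\le1$'' property. One then sets $f_0=\alpha\circ f$ for the single $\alpha$ produced by iterating Lemma~\ref{lemma:rto1tosto1}, and the four conditions fall out directly.

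By contrast, working with the reducible $\Crit_f$ from Proposition~\ref{proposition:hyperplane}, you face verifications you never carry out. First, to even invoke Lemma~\ref{lemma:rto1tosto1} with $H=B=f(C)$ you need $B\not\subseteq\Crit_f$ and $f(B)\not\subseteq\Branch_f$; neither is supplied by Proposition~\ref{proposition:hyperplane}, and for that very particular $f$ there is no a priori reason either should hold. Second, Condition~(1) for $\ell=2$ — that $C$ is the \emph{only} component of $\Crit_{f_0}$ with $f_0^2(C')=f_0^2(C)$ — does not follow from Proposition~\ref{proposition:hyperplane}(b)(2), which only says $C$ is the unique component with $f(C')=B$: two distinct hypersurfaces $\alpha(f(C'))\neq\alpha(B)$ can still have the same image under $f$, and your claim that a ``generic $\alpha$'' rules this out is asserted, not proved, and would in any case need to be reconciled with the very specific $\alpha$ coming out of Lemma~\ref{lemma:rto1tosto1}. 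Third, your resolution of the ``main obstacle'' is off: the proof of Lemma~\ref{lemma:rto1tosto1} produces $\alpha_t$ along a single curve $\Spec\FF\dspt\to\PGL_{n+1}$, which is not a dense open subset of $\PGL_{n+1}$; the fact (if one wants it) that the locus $\{\alpha:\deg f|_{\alpha(H)}=1\}$ is open requires a separate semicontinuity argument, and even then you would have to intersect it with the additional open conditions needed for (1) and (2), none of which you have shown to be nonempty. Routing the argument through the $\ell=1$ case as the paper does eliminates all of these difficulties at once.
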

\begin{proof}
By Proposition~\ref{proposition:hyperplane} and
Theorem~\ref{theorem:critgentype}, there exists $f\in\End_d^n$ such
that
\begin{parts}
  \Part{\textbullet}
  $\Crit_f$ is irreducible and of general type.
  \Part{\textbullet}
  $f$ is not PCF with tail length $1$, i.e., 
  \[
f(\Branch_f)=f^2(\Crit_f)\not\subset f(\Crit_f)=\Branch_f.
\]
 \Part{\textbullet}
  $f:\Crit_f\to \Branch_f$ is generically~$1$-to-$1$.
\Part{\textbullet}
  $f$ has multiplicity $2$ along $\Crit_f$
\end{parts}
Thus $f$ satisfies conditions (1), (2) and (4) of the hypotheses of
Proposition~\ref{prop:taillengthell}. If~$f|_{\Branch_f}$ is
generically~$1$-to-$1$, then $f$ also satisfies condition (3) so we
are done. If not, we use Lemma~\ref{lemma:rto1tosto1} to find
an~$\alpha\in\PGL_{n+1}$ such that~$f|_{\alpha(\Branch_f)}$ is
generically~$1$-to-$1$. Set $f_0=\alpha\circ f$. Then
\[
\Crit_{f_0}=\Crit_f,\quad
\Branch_{f_0}=\alpha(\Branch_f),
\quad\text{and}\quad
(f_0)|_{\Branch_{f_0}}=(f_0)|_{\alpha(\Branch_f)}=(\alpha\circ f)|_{\alpha(\Branch_f)}.
\]
This last map~$(f_0)|_{\Branch_{f_0}}$ is generically~$1$-to-$1$
because~$f|_{\alpha(\Branch_f)}$ is generically~$1$-to-$1$
and~$\alpha$ is everywhere~$1$-to-$1$. Finally the multiplicity
of~$f_0$ equals the multiplicity of~$f$ along~$\Crit_{f_0}=\Crit_f$,
thus is~$2$. Thus~$f_0$ satisfies the hypotheses of
Proposition~\ref{prop:taillengthell} for~$\ell=2$.
\end{proof}

We now have the tools to prove the main result of this section, which
is that PCF maps with tail length at most~$2$ are sparse.

\begin{theorem} 
\label{thm:Preperiodtwo}
Let~$n\ge3$ and~$d\ge3$. Then
\[
\{f\in \End_d^n :\text{$f^{k}(\Crit_f)\subseteq f^{2}(\Crit_f)$ for some $k\ge2$} \}
\]
is contained in a proper closed subvariety of~$\End_d^n$.
\end{theorem}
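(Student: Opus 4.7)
The plan is to combine the two main tools already assembled in the previous section, namely Proposition~\ref{prop:taillengthell} (the general roadmap reducing sparseness of tail-length~$\ell$ PCF maps to the existence of a single well-behaved example) and Lemma~\ref{lem:taillengthtwo} (which produces such an example in the case $\ell=2$). All of the hard geometric work has already been done, so the argument at this point is essentially a packaging step.

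Concretely, I would proceed as follows. First, I would invoke Lemma~\ref{lem:taillengthtwo} to fix a particular $f_0 \in \End_d^n$ satisfying Conditions~(1)--(4) of Proposition~\ref{prop:taillengthell} for $\ell=2$: that is, there is an irreducible component $B$ of $f_0^2(\Crit_{f_0})$ that has a unique preimage component $C \subset \Crit_{f_0}$ under $f_0^2$, the intermediate image $f_0(C)$ is not contained in $\Crit_{f_0}$, the restriction $f_0^2|_C : C \to B$ is generically $1$-to-$1$, and $f_0^2$ has multiplicity $2$ along $C$. Second, I would feed this $f_0$ into Proposition~\ref{prop:taillengthell} with $\ell=2$. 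The conclusion of that proposition gives a non-empty Zariski open set $U_{d,2}^n \subset \End_d^n$ such that every $f \in U_{d,2}^n$ fails to be PCF of tail-length~$2$. The complement of $U_{d,2}^n$ is a proper Zariski closed subset of $\End_d^n$ containing every PCF map of tail-length exactly~$2$, which is precisely the statement of Theorem~\ref{thm:Preperiodtwo}.

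There is effectively no obstacle remaining at this stage; the entire difficulty has been concentrated into Lemma~\ref{lem:taillengthtwo}, whose proof in turn builds on the hyperplane construction of Proposition~\ref{proposition:hyperplane}, the general-type result Theorem~\ref{theorem:critgentype}, and the degree-reduction trick of Lemma~\ref{lemma:rto1tosto1} (applied to replace $f$ by $\alpha \circ f$ so that $f_0$ is generically one-to-one on its branch locus). One could note that the argument is robust in the following sense: if an analogous existence result could be proved for larger $\ell$, exactly the same two-line deduction from Proposition~\ref{prop:taillengthell} would yield the corresponding sparseness statement, which is the content of Conjecture~\ref{conjecture:PCPnotdenseinRatdn}.
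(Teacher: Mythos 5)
Your proposal is correct and matches the paper's own proof exactly: invoke Lemma~\ref{lem:taillengthtwo} to produce a map $f_0$ satisfying Conditions (1)--(4) of Proposition~\ref{prop:taillengthell} for $\ell=2$, then apply that proposition to obtain the non-empty Zariski open set $U_{d,2}^n$ avoiding tail-length-$2$ PCF maps. Nothing is missing; the deduction is as short as you describe.
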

\begin{proof}
By Lemma~\ref{lem:taillengthtwo}, there exists a map $f_0\in\End_d^n$
satisfying the hypotheses of Proposition~\ref{prop:taillengthell} for
$\ell=2$. Thus we can use Proposition~\ref{prop:taillengthell} to
conclude that
\[
  \{f\in \End_d^n :\text{$f^{k}(\Crit_f)\subseteq
  f^{2}(\Crit_f)$ for some $k\ge2$} \}
\]
is contained in a proper closed subvariety of~$\End_d^n$.
\end{proof}


\begin{acknowledgement}
We thank MathOverflow users for pointing us towards the
references~\cite{MR2775814,MR1817250} used in
Theorem~\ref{theorem:A2case}
(\url{mathoverflow.net/q/163086/11926}). We thank Jason Starr for
noting that for~$n\ge3$, the critical locus~$\Crit_f$ should be
generically singular, since the determinant locus is itself singular
in codimension~3, and for showing us the proof of
Theorem~\ref{theorem:critgentype} which says that despite the
singularities, the critical locus~$\Crit_f$ is generically a variety
of general type.  \par Ingram and Silverman thank AIM for hosting a
workshop on Postcritically Finite Maps in 2014 during which this
research was initiated. All three authors thank ICERM for hosting them
at a week-long collaboration in~2019, which enabled them to make
significant further progress on the results contained in this paper.
\end{acknowledgement}



\def\cprime{$'$}

\end{document}